\setlist[itemize]{noitemsep,topsep=0pt}
\theoremstyle{plain}
\newtheorem{theorem}{Theorem}[section]
\newtheorem{lemma}[theorem]{Lemma}
\newtheorem{proposition}[theorem]{Proposition}
\theoremstyle{definition}
\newtheorem{remark}[theorem]{Remark}
\DeclareMathAlphabet{\mathbbmsl}{U}{bbm}{m}{sl}
\DeclareMathAlphabet{\mathpzc}{OT1}{pzc}{m}{it}
\def\urltilda{\kern-0.15em\lower0.7ex\hbox{\~{}}\kern0.04em}
\newcommand{\FF}{\mathbbmsl{F}}
\DeclareMathOperator{\rank}{rank}
\begin{document}

\title{\bf{On the classification  of skew Hadamard matrices of order $\boldsymbol{36}$ and related structures}\\[9mm]}

\author{Makoto Araya$^{^1}$  \qquad Masaaki Harada$^{^2}$  \qquad   Hadi Kharaghani$^{^3}$  \\      Ali Mohammadian$^{^4}$  \qquad  Behruz Tayfeh-Rezaie$^{^5}$  \\[4mm]
$^{^1}$Department of Computer Science, Shizuoka University,\\ Hamamatsu 432-8011, Japan\\
$^{^2}$Research Center for Pure and Applied Mathematics, \\ Graduate School of Information Sciences,   Tohoku University, \\ Sendai 980-8579, Japan \\
$^{^3}$Department of Mathematics and Computer Science, University of Lethbridge, \\ Lethbridge, Alberta, T1K 3M4, Canada \\
$^{^4}$School of Mathematical Sciences,    Anhui University, \\ Hefei 230601,  Anhui,  China \\
$^{^5}$School of Mathematics, Institute for Research in Fundamental Sciences (IPM), \\ P.O. Box 19395-5746, Tehran, Iran \\[4mm]
\href{mailto:araya@inf.shizuoka.ac.jp}{araya@inf.shizuoka.ac.jp} \qquad
\href{mailto:mharada@tohoku.ac.jp}{mharada@tohoku.ac.jp} \qquad
\href{mailto:kharaghani@uleth.ca}{kharaghani@uleth.ca} \\
\href{mailto:ali\_m@ahu.edu.cn}{ali\_m@ahu.edu.cn}  \qquad
\href{mailto:tayfeh-r@ipm.ir}{tayfeh-r@ipm.ir}\vspace{9mm}}

\date{}

\sloppy

\maketitle

\begin{abstract}
\noindent
Two skew Hadamard matrices are considered {\sf SH}-equivalent if they are similar by a signed permutation matrix.  This paper determines the number of    {\sf SH}-inequivalent skew Hadamard matrices of order  $36$ for some types.
We also study ternary self-dual codes and association schemes constructed from the skew Hadamard matrices of order $36$.  \\[3mm]
\noindent{\bf Keywords:}   Association Scheme,  Classification,   Skew Hadamard Matrix, Ternary Self-Dual Code.     \\[1mm]
\noindent {\bf AMS 2020  Mathematics Subject Classification:}  05B20,  15B34, 94B05. \\[8mm]
\end{abstract}

\section{Introduction}

A {\sl  Hadamard  matrix}  of order $n$ is an $n\times n$  matrix $H$ with entries in $\{-1, 1\}$   such that $HH^\top=nI$, where $H^\top$ is
the transpose of $H$ and $I$ is the   identity matrix.
It is well known that the order of a Hadamard matrix is $1$, $2$,   or a multiple of $4$  \cite{pal}.
It is a  longstanding folklore conjecture that Hadamard matrices of order $n$ exist for any $n$ divisible by $4$.
The smallest order for which no known Hadamard matrix is  $668$   \cite{kha}.

A Hadamard matrix $H$ is said to be {\sl skew Hadamard} if $H+H^\top=2I$. Skew Hadamard matrices are equivalent to doubly regular tournaments \cite{bro}.
They are used to construct several combinatorial objects, such as association schemes, self-dual codes, strongly regular graphs, and more. It is conjectured that skew Hadamard matrices of order $n$ exist for any $n$ divisible by $4$ \cite{wall}.
The smallest unknown order of a skew Hadamard matrix is  $356$  \cite{doko276}.

A {\sl permutation matrix} (respectively,  {\sl signed permutation matrix})  is a matrix with entries in    $\{0, 1\}$  (respectively,   $\{-1, 0, 1\}$)  which has exactly one nonzero entry in each row and each column.
Two  matrices $X$ and $Y$ with entries in $\{-1, 1\}$  are  said to be {\sl {\sf H}-equivalent} if there exist two signed permutation matrices  $P$ and $Q$ such that $Y=PXQ$, otherwise, they are called  {\sl {\sf H}-inequivalent}.
Two      matrices $X$ and $Y$ with entries in $\{-1, 1\}$  are  said to be {\sl {\sf SH}-equivalent}  if there exists  a signed permutation matrix   $P$ such that $Y=P^\top XP$, otherwise, they are called  {\sl {\sf SH}-inequivalent}.

All  {\sf H}-inequivalent Hadamard matrices of orders up to $32$ have been classified \cite{khar}.
Also, all    {\sf H}-inequivalent and {\sf SH}-inequivalent skew Hadamard matrices of orders up to  $32$   have been classified \cite{hana}.
The resulting classification is shown in Table \ref{tab}. The complete classification of Hadamard matrices of  order $36$   seems to be difficult and
perhaps inaccessible.

\begin{table}[!htb]
\begin{center}
{\footnotesize\begin{tabular}{|c|c|c|c|c|c|c|c|c|c|c|}\hline
order & 1 & 2 & 4 & 8 & 12 & 16 & 20 & 24 & 28 & 32  \\ \hline
\texttt{\#} {\sf H}-inequivalent   Hadamard  matrices  & 1 & 1 & 1 & 1 & 1 & 5 & 3 & 60 & 487 & 13710027  \\ \hline
\texttt{\#} {\sf H}-inequivalent   skew Hadamard  matrices  & 1 & 1 & 1 & 1 & 1 &  2 &  2 &  16 & 54 & 6662  \\ \hline
\texttt{\#} {\sf SH}-inequivalent   skew Hadamard  matrices  & 1 & 1 & 1 & 1 & 1 & 2  & 2 & 16 & 65  & 7227   \\ \hline
\end{tabular}}
\caption{The number of  {\sf H}-inequivalent  Hadamard matrices,  {\sf H}-inequivalent  skew Hadamard matrices,  and  {\sf SH}-inequivalent  skew Hadamard matrices  of orders  up to  $32$.}\label{tab}
\end{center}
\end{table}

This paper determines the number of    {\sf SH}-inequivalent skew Hadamard matrices of order  $36$ for some types.
As an application, we study ternary self-dual codes and association schemes constructed from the skew Hadamard matrices of order $36$.

\section{Preliminaries}

In this section, we fix our notation and present some preliminary results.  Throughout the paper,  we denote all one vector of length  $r$  by    $\mathbb {1}_r$.
Let $H=[h_{uv}]$ be a Hadamard matrix of order $n$.
We know  from \cite{coo} that,  by a sequence of        column negations    and   column   permutations, every four  distinct  rows $i, j, k, \ell$
of $H$ may be transformed   to the   form
\begin{eqnarray}\label{defini}
\begin{array}{rrrrrrrrrr}
\mathbbm{1}_s & \mathbbm{1}_t & \mathbbm{1}_t & \mathbbm{1}_s & \mathbbm{1}_t & \mathbbm{1}_s & \mathbbm{1}_s & \mathbbm{1}_t \\
\mathbbm{1}_s & \mathbbm{1}_t & \mathbbm{1}_t & \mathbbm{1}_s & -\mathbbm{1}_t & -\mathbbm{1}_s & -\mathbbm{1}_s & -\mathbbm{1}_t \\
\mathbbm{1}_s & \mathbbm{1}_t & -\mathbbm{1}_t & -\mathbbm{1}_s & \mathbbm{1}_t & \mathbbm{1}_s & -\mathbbm{1}_s & -\mathbbm{1}_t \\
\mathbbm{1}_s & -\mathbbm{1}_t & \mathbbm{1}_t & -\mathbbm{1}_s & \mathbbm{1}_t & -\mathbbm{1}_s & \mathbbm{1}_s & -\mathbbm{1}_t
\end{array}\end{eqnarray}
for some uniquely determined $s, t$ with $s+t=\frac{n}{4}$. By negation of the last row in \eqref{defini} and then a suitable column permutation if necessary, we may assume that $s\geqslant t$ and so   $0\leqslant t\leqslant\lfloor\frac{n}{8}\rfloor$.
Following \cite{khloe}, we define   the {\sl type} of the four rows   $i, j, k, \ell$ as  $T_{ijk\ell}=t$. It is straightforward to   check  that   $T_{ijk\ell}=\frac{n-|P_{ijk\ell}|}{8}$, where $$P_{ijk\ell}=\sum_{r=1}^n h_{ir}h_{jr}h_{kr}h_{\ell r}.$$
This shows that `type' is an equivalence invariant, meaning that any negation of rows and columns and any permutation of columns leaves the type unchanged.
As  $H^\top$ is a Hadamard matrix, we may define `type' for any quadruple of columns of $H$ in a similar way. A Hadamard
matrix is of {\sl type}  $t$ if it has a quadruple of rows  of type $t$  and no quadruple of rows
of type less than $t$.
We refer to  \cite{MT} for more information about types of Hadamard matrices.

Below, we find a nice form for a quadruple of rows of a skew Hadamard matrix.
It is not hard  to verify that
any $4\times 4$   skew-symmetric   matrix with entries in $\{-1, 1\}$  and    with constant diagonal   is {\sf H}-equivalent to    one of the matrices
\begin{equation}\label{interAB}
A=\left[
\begin{array}{rrrr}
                                       1 & 1 & 1 & 1\\
                                       -1 & 1 & 1 & 1 \\
                                       -1& -1 & 1 & 1 \\
                                       -1 & -1 & -1 &1
                                     \end{array}
                                   \right] \quad  \text{ or }  \quad
                                   B=\left[
                                     \begin{array}{rrrr}
                                       1 & 1& 1 & 1 \\
                                       -1 & 1 & 1 & -1 \\
                                       -1 & -1 & 1 &  1 \\
                                      -1 &  1 & -1 & 1
\end{array}\right].
\end{equation}
Therefore,
by a sequence of  row  negations and row permutations,   and   simultaneously,  column  negations  and column  permutations, every quadruple  of    rows of   a skew Hadamard matrix   may be transformed   to one of  the   forms
\begin{eqnarray}\label{defskewA}\begin{array}{rrrrrrrrrrrrrrrrrr}
1& 1& 1& 1&   \mathbbm{1}_{s-1}  &   \mathbbm{1}_{t-1} &  \mathbbm{1}_t & \mathbbm{1}_{s-1} & \mathbbm{1}_t & \mathbbm{1}_s & \mathbbm{1}_s    & \mathbbm{1}_{t-1}  \\
-1& 1& 1& 1&   \mathbbm{1}_{s-1}   &   \mathbbm{1}_{t-1} &  \mathbbm{1}_t & \mathbbm{1}_{s-1} & -\mathbbm{1}_t & -\mathbbm{1}_s & -\mathbbm{1}_s    & -\mathbbm{1}_{t-1}  \\
-1& -1& 1& 1&   \mathbbm{1}_{s-1}   &   \mathbbm{1}_{t-1} &  -\mathbbm{1}_t & -\mathbbm{1}_{s-1} & \mathbbm{1}_t & \mathbbm{1}_s & -\mathbbm{1}_s    & -\mathbbm{1}_{t-1}  \\
-1& -1& -1& 1&   \mathbbm{1}_{s-1}   &   -\mathbbm{1}_{t-1} &  \mathbbm{1}_t  & -\mathbbm{1}_{s-1} & \mathbbm{1}_t & -\mathbbm{1}_s & \mathbbm{1}_s   & -\mathbbm{1}_{t-1}
\end{array}\end{eqnarray}
or
\begin{eqnarray}\label{defskewB}\begin{array}{rrrrrrrrrrrrrrrrrr}
1& 1& 1& 1&   \mathbbm{1}_s  &   \mathbbm{1}_{t-1} &  \mathbbm{1}_{t-1} & \mathbbm{1}_{s} & \mathbbm{1}_{t-1} & \mathbbm{1}_s & \mathbbm{1}_s    & \mathbbm{1}_{t-1}  \\
-1& 1& 1& -1&   \mathbbm{1}_s   &   \mathbbm{1}_{t-1} &  \mathbbm{1}_{t-1} & \mathbbm{1}_{s} & -\mathbbm{1}_{t-1} & -\mathbbm{1}_s & -\mathbbm{1}_s    & -\mathbbm{1}_{t-1}  \\
-1& -1& 1& 1&   \mathbbm{1}_s   &   \mathbbm{1}_{t-1} &  -\mathbbm{1}_{t-1} & -\mathbbm{1}_{s} & \mathbbm{1}_{t-1} & \mathbbm{1}_s & -\mathbbm{1}_s    & -\mathbbm{1}_{t-1}  \\
-1&  1& -1& 1&   \mathbbm{1}_s   &   -\mathbbm{1}_{t-1} &  \mathbbm{1}_{t-1}  & -\mathbbm{1}_{s} & \mathbbm{1}_{t-1} & -\mathbbm{1}_s & \mathbbm{1}_s   & -\mathbbm{1}_{t-1}
\end{array}\end{eqnarray}
for some   $s, t$  with  $s+t=\frac{n}{4}$. The quadruples of rows given in \eqref{defskewA} and \eqref{defskewB} are of type  $\min\{s, t\}$.
Letting
$$U=\left[
             \begin{array}{rrrr}
               0 & 0 & 0 & -1 \\
              1 & 0 & 0 &0 \\
               0 & 1 & 0 &0 \\
               0 & 0 & 1 &0 \\
             \end{array}
\right]$$
and $V=\mathrm{diag}(-I_{s-1}, I_{t-1}, -I_{t}, I_{s-1}, -I_{t}, I_{s}, -I_{s}, I_{t-1})$ and multiplying the  quadruple  of    rows given in  \eqref{defskewA} by $U$ from the left and   by  $U^\top\oplus V$ from the right, we may assume that $s\geqslant t$. This means the quadruple of rows given in \eqref{defskewA} is of type  $t$.
We define the {\sl skew type} of the quadruples of rows given in \eqref{defskewA}   as $(t, 1)$.
Also, we define the {\sl skew type} of  the    quadruples of rows  given in \eqref{defskewB}  as $(s, 2)$ if $s\leqslant t$  and as $(t, 0)$ if $s> t$.
Note that skew types $(0, 0)$ and $(0, 1)$ are not admissible. A skew Hadamard
matrix is of {\sl skew type}  $(t, \varepsilon)$ if it is a Hadamard matrix of    type $t$ containing      a quadruple of rows of  skew   type $(t, \varepsilon)$  and   no quadruples  of rows
of skew type   $(t, \varepsilon')$ with     $\varepsilon'<\varepsilon$.

For a    quadruple $\{i, j, k, \ell\}$ of rows of a  skew Hadamard matrix   $H=[h_{ij}]$, define   $$Q_{ijk\ell}=\sum_{r\in\{i, j, k, \ell\}}h_{ir}h_{jr}h_{kr}h_{\ell r}.$$
Note that    $Q_{ijk\ell}=0$ if and only if   the  $4\times 4$ principal submatrix corresponding to the     quadruple $\{i, j, k, \ell\}$ is     {\sf SH}-equivalent   to $A$,  given in \eqref{interAB}.
Moreover,  the parameter   $P_{ijk\ell}Q_{ijk\ell}$ is invariant under      negation and permutation of rows,     and simultaneously,   negation and permutation   of   columns
The following proposition describes the skew type of a quadruple of rows of a skew Hadamard matrix.  It is easily proved using the forms \eqref{defskewA} and \eqref{defskewB} and noting that   $P_{ijk\ell}Q_{ijk\ell}$ is an invariant parameter.

\begin{proposition}\label{des-type}
Let $H$ be a skew Hadamard matrix of order $n$.
Then, the following statements hold for every quadruple $\{i, j, k, \ell\}$ of rows of $H$ of type $t$.
\begin{itemize}
\item[{\rm (i)}] The skew type  of $\{i, j, k, \ell\}$    is    $(t, 0)$ if and only of $P_{ijk\ell}Q_{ijk\ell}<0$.
\item[{\rm (ii)}] The skew type of   $\{i, j, k, \ell\}$    is    $(t, 1)$ if and only of $Q_{ijk\ell}=0$.
\item[{\rm (iii)}]  The skew type  of  $\{i, j, k, \ell\}$    is    $(t, 2)$ if and only of $Q_{ijk\ell}\neq0$ and  $P_{ijk\ell}Q_{ijk\ell}\geqslant0$.
\end{itemize}
\end{proposition}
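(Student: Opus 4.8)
The plan is to reduce the statement to a short calculation on the two canonical forms \eqref{defskewA} and \eqref{defskewB}. The three conditions appearing in parts (i)--(iii), namely $P_{ijk\ell}Q_{ijk\ell}<0$, then $Q_{ijk\ell}=0$, and finally $Q_{ijk\ell}\neq 0$ together with $P_{ijk\ell}Q_{ijk\ell}\geqslant 0$, are pairwise incompatible and cover every possibility, so it suffices to check, for each of the two canonical forms, that the skew type it carries is the one predicted by the proposition. By the discussion preceding \eqref{defskewA}, every quadruple $\{i,j,k,\ell\}$ of rows of type $t$ is transformed to \eqref{defskewA} or to \eqref{defskewB} by a sequence of simultaneous row and column negations and permutations (and, in the first case, the further reduction by $U$ and $U^\top\oplus V$); these operations leave the type unchanged, preserve the value of $P_{ijk\ell}Q_{ijk\ell}$ by the remark just before the proposition, and preserve the condition $Q_{ijk\ell}=0$ because they induce an {\sf SH}-equivalence on the $4\times 4$ principal submatrix on $\{i,j,k,\ell\}$ (recall $Q_{ijk\ell}=0$ exactly when that submatrix is {\sf SH}-equivalent to $A$).

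Next I would compute $P_{ijk\ell}$ and $Q_{ijk\ell}$ on the two forms. For \eqref{defskewA}, the principal submatrix on the quadruple equals $A$; the product of the four entries down its successive columns is $-1,1,-1,1$, so $Q_{ijk\ell}=0$, and after multiplication by $U$ and $U^\top\oplus V$ this form has $s\geqslant t$, hence it is of type $t$ and, by definition, of skew type $(t,1)$. For \eqref{defskewB}, the principal submatrix equals $B$, whose four column-products are all $-1$, so $Q_{ijk\ell}=-4$; summing the column-products over the remaining eight blocks --- each of the four blocks of length $s$ contributes $+1$ and each of the four blocks of length $t-1$ contributes $-1$ --- gives $P_{ijk\ell}=-4+4s-4(t-1)=4(s-t)$, so that $P_{ijk\ell}Q_{ijk\ell}=-16(s-t)$.

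Finally I would read off the skew types. By the computation above, a quadruple of form \eqref{defskewB} has $Q_{ijk\ell}=-4\neq 0$, so a quadruple of type $t$ satisfies $Q_{ijk\ell}=0$ if and only if it is of the form \eqref{defskewA}, in which case its skew type is $(t,1)$; this is (ii). For a quadruple of form \eqref{defskewB}, which has type $\min\{s,t\}$: if $s>t$ its skew type is $(t,0)$ and $P_{ijk\ell}Q_{ijk\ell}=-16(s-t)<0$; if $s<t$ its skew type is $(s,2)$ with $P_{ijk\ell}Q_{ijk\ell}=16(t-s)>0$; and if $s=t$ its skew type is $(t,2)$ with $P_{ijk\ell}Q_{ijk\ell}=0$. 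In all three subcases $Q_{ijk\ell}\neq 0$, and rephrasing in terms of the ordinary type of the quadruple (which is the first coordinate of the skew type in each case) gives (i) and (iii).

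I do not anticipate a genuine obstacle: the computation is finite and short. The one point requiring attention is the bookkeeping between the internal parameters $s,t$ of the canonical arrays and the labels ``type'' and ``skew type'' --- in particular that it is $\min\{s,t\}$, not $t$, that equals the ordinary type of a quadruple of the form \eqref{defskewB}, and that the asymmetric definition of skew type for \eqref{defskewB} according to whether $s\leqslant t$ or $s>t$ must be tracked consistently through the sign of $P_{ijk\ell}$.
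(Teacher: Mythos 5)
Your proof is correct and is exactly the argument the paper alludes to: the paper itself gives no details, saying only that the result "is easily proved using the forms \eqref{defskewA} and \eqref{defskewB} and noting that $P_{ijk\ell}Q_{ijk\ell}$ is an invariant parameter," and your computation ($Q_{ijk\ell}=0$ for form \eqref{defskewA}; $Q_{ijk\ell}=-4$, $P_{ijk\ell}=4(s-t)$ for form \eqref{defskewB}, combined with the mutual exclusivity and exhaustiveness of the three conditions) fills in precisely those details. The bookkeeping you flag at the end is handled correctly, so there is nothing to add.
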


The following lemma shows the relation between the skew type of a quadruple of rows in a skew Hadamard matrix $H$ and the skew type of their corresponding rows in  $H^\top$.

\begin{lemma}\label{3part}
Let $H$ be a skew Hadamard matrix of order $n$.
Then, the following statements hold.
\begin{itemize}
\item[{\rm (i)}] If the quadruple $\{i, j, k, \ell\}$ of rows of $H$  is of skew type  $(t, 0)$, then the   quadruple $\{i, j, k, \ell\}$ of rows of  $H^\top$  is  of skew type  $(t-1, 2)$.
\item[{\rm (ii)}]  If the quadruple $\{i, j, k, \ell\}$ of rows of $H$  is of skew type  $(t, 1)$, then the   quadruple $\{i, j, k, \ell\}$  of rows of  $H^\top$  is also of skew type  $(t, 1)$.
\item[{\rm (iii)}]  If the quadruple $\{i, j, k, \ell\}$ of rows of $H$  is of skew type  $(s, 2)$, then the   quadruple $\{i, j, k, \ell\}$  of rows of  $H^\top$  is  of skew type
$$\left\{\begin{array}{ll}
(s-1, 2)     &   \mbox{ if } s=\frac{n}{8}\mbox{,}\\
\vspace{-1mm}\\
(s, 2)     &   \mbox{ if } s=\frac{n-4}{8}\mbox{,}\\
\vspace{-1mm}\\
(s+1, 2)     &   \mbox{ if } s=\frac{n}{8}-1\mbox{,}\\
\vspace{-1mm}\\
(s+1, 0) &   \mbox{   } \mbox{otherwise.}
\end{array}\right.
$$
\end{itemize}
\end{lemma}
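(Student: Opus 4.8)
The plan is to compare the parameters $P_{ijk\ell}$ and $Q_{ijk\ell}$ of $H$ with those of the same index set regarded inside $H^\top$, and then to recover the skew types from Proposition~\ref{des-type}; note that $H^\top$ is again skew Hadamard, so Proposition~\ref{des-type} applies to it. Write $P^\top_{ijk\ell}$ and $Q^\top_{ijk\ell}$ for the quantities $P$ and $Q$ computed for the quadruple $\{i,j,k,\ell\}$ of rows of $H^\top$; thus $P^\top_{ijk\ell}=\sum_{r=1}^n h_{ri}h_{rj}h_{rk}h_{r\ell}$ and $Q^\top_{ijk\ell}=\sum_{r\in\{i,j,k,\ell\}}h_{ri}h_{rj}h_{rk}h_{r\ell}$. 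Since $H$ is skew Hadamard we have $h_{ii}=1$ and $h_{ri}=-h_{ir}$ for $r\neq i$, so for a column $r\notin\{i,j,k,\ell\}$ all four factors of $h_{ri}h_{rj}h_{rk}h_{r\ell}$ change sign and the term equals $h_{ir}h_{jr}h_{kr}h_{\ell r}$, whereas for $r\in\{i,j,k,\ell\}$ exactly three of the four factors change sign and the term equals $-h_{ir}h_{jr}h_{kr}h_{\ell r}$. Summing over all $n$ columns yields the key identities
\[
Q^\top_{ijk\ell}=-Q_{ijk\ell}\qquad\text{and}\qquad P^\top_{ijk\ell}=P_{ijk\ell}-2Q_{ijk\ell}.
\]

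Next I would pin down $P_{ijk\ell}$ and $Q_{ijk\ell}$ for each skew type. The facts I would use are that $|P_{ijk\ell}|=n-8t$ for a quadruple of type $t$ (already observed above) and that $Q_{ijk\ell}\in\{0,\pm4\}$, which one reads off the canonical forms \eqref{defskewA} and \eqref{defskewB}, whose principal $4\times4$ block is $A$ (giving $Q_{ijk\ell}=0$) or $B$ (giving $Q_{ijk\ell}=-4$). Negating any one of the rows $i,j,k,\ell$ multiplies $P_{ijk\ell}$ and $Q_{ijk\ell}$ simultaneously by $-1$, hence by the identities above also multiplies $P^\top_{ijk\ell}$ and $Q^\top_{ijk\ell}$ by $-1$, so it changes the skew type neither in $H$ nor in $H^\top$; thus the overall sign may be fixed. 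Combining this with Proposition~\ref{des-type} and the inequalities that are built into the definition of skew type ($t<n/8$ in skew type $(t,0)$, and $s\leqslant n/8$ in skew type $(s,2)$), the pair $(P_{ijk\ell},Q_{ijk\ell})$ is, up to that sign, $(n-8t,0)$ in skew type $(t,1)$, $(n-8t,-4)$ in skew type $(t,0)$, and $(-(n-8s),-4)$ in skew type $(s,2)$.

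The final step is to substitute these values into the identities. In skew type $(t,1)$ one gets $Q^\top_{ijk\ell}=0$ and $P^\top_{ijk\ell}=P_{ijk\ell}$, so the transpose quadruple has type $t$ and skew type $(t,1)$ by Proposition~\ref{des-type}(ii); this is (ii). In skew type $(t,0)$ one gets $Q^\top_{ijk\ell}=4$ and $P^\top_{ijk\ell}=n-8t+8=n-8(t-1)$, which is positive since $t<n/8$, so the transpose quadruple has type $t-1$ and $P^\top_{ijk\ell}Q^\top_{ijk\ell}>0$, hence skew type $(t-1,2)$ by Proposition~\ref{des-type}(iii); this is (i). In skew type $(s,2)$ one gets $Q^\top_{ijk\ell}=4\neq0$ and $P^\top_{ijk\ell}=-(n-8(s+1))$, so $P^\top_{ijk\ell}Q^\top_{ijk\ell}=-4\bigl(n-8(s+1)\bigr)$ and the transpose quadruple has type $\tfrac18\bigl(n-|n-8(s+1)|\bigr)$; a case split on the sign of $n-8(s+1)$ then reproduces exactly the four sub-cases of (iii). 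If $n-8(s+1)>0$ the type is $s+1$ and $P^\top_{ijk\ell}Q^\top_{ijk\ell}<0$, giving $(s+1,0)$; if $n-8(s+1)=0$ then $s=\tfrac n8-1$, $P^\top_{ijk\ell}=0$, the type is $\tfrac n8=s+1$, giving $(s+1,2)$; if $n-8(s+1)<0$ then, since $s\leqslant\lfloor n/8\rfloor$, one has $s=\tfrac n8$ (when $8\mid n$) or $s=\tfrac{n-4}8$ (when $n\equiv4\pmod 8$), the type $\tfrac n4-(s+1)$ is then $s-1$ or $s$, and $P^\top_{ijk\ell}Q^\top_{ijk\ell}>0$, giving $(s-1,2)$ or $(s,2)$. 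I expect no genuine obstacle: the one place that needs care is this final split, where the bound $s\leqslant\lfloor n/8\rfloor$ is used to see that $n-8(s+1)<0$ leaves room for just one value of $s$ according to $n\bmod 8$, and where $|n-8(s+1)|$ must be evaluated with the correct sign — everything else being forced by the two displayed identities.
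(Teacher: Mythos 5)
Your proof is correct: the identities $Q^\top_{ijk\ell}=-Q_{ijk\ell}$ and $P^\top_{ijk\ell}=P_{ijk\ell}-2Q_{ijk\ell}$ do follow from skew-symmetry exactly as you argue, and substituting the per-skew-type values of $(P_{ijk\ell},Q_{ijk\ell})$ (determined up to a harmless simultaneous sign) into them and invoking Proposition \ref{des-type} reproduces all cases, including the three-way split in part (iii). This is essentially the paper's own argument: the paper reads the value $P^\top_{ijk\ell}=4(s-t+2)$ directly off the transposed canonical forms \eqref{defskewA} and \eqref{defskewB} instead of isolating the two general identities, but the computation and the appeal to Proposition \ref{des-type} are the same.
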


\begin{proof}
To prove (i), let $\{i, j, k, \ell\}$  be a quadruple of  rows of $H$   of skew type  $(t, 0)$ and let $s=\frac{n}{4}-t$.
By \eqref{defskewB}, we have $T_{ijk\ell}=\frac{n-|P_{ijk\ell}|}{8}=\frac{4(s+t)-4|s-t+2|}{8}=t-1$  in $H^\top$, by noting that   $s>t$.
Since $P_{ijk\ell}Q_{ijk\ell}>0$ in   $H^\top$, it follows from  Proposition \ref{des-type}(iii) that the quadruple $\{i, j, k, \ell\}$ of rows of  $H^\top$  is  of skew type  $(t-1, 2)$.

For  (ii), let $\{i, j, k, \ell\}$  be a quadruple of  rows of $H$   of skew type  $(t, 1)$ and let $s=\frac{n}{4}-t$.
By \eqref{defskewA}, we have      $T_{ijk\ell}=\frac{n-|P_{ijk\ell}|}{8}=\frac{4(s+t)-4|s-t|}{8}=t$   in $H^\top$,   by noting that   $s\geqslant t$.
Since     $Q_{ijk\ell}=0$ in   $H^\top$, it follows from  Proposition \ref{des-type}(ii) that     the   quadruple $\{i, j, k, \ell\}$ of rows of  $H^\top$  is  of skew type  $(t, 1)$.

Finally, to establish (iii), let $\{i, j, k, \ell\}$  be a quadruple of  rows of $H$   of skew type  $(s, 2)$ and let $t=\frac{n}{4}-s$.
As $s\leqslant t$, it follows from   \eqref{defskewB} that
$$T_{ijk\ell}=\frac{n-|P_{ijk\ell}|}{8}=\frac{4(s+t)-4|s-t+2|}{8}=\left\{\begin{array}{ll}
s-1      &   \mbox{ if } s=\frac{n}{8}\mbox{,}\\
\vspace{-1mm}\\
s     &   \mbox{ if } s=\frac{n-4}{8}\mbox{,}\\
\vspace{-1mm}\\
s+1 &   \mbox{   } \mbox{otherwise,}
\end{array}\right.
$$
in $H^\top$.
If $s\in\{\frac{n}{8}, \frac{n-4}{8}, \frac{n}{8}-1\}$, then       $P_{ijk\ell}Q_{ijk\ell}\geqslant0$ in   $H^\top$,   and otherwise,       $P_{ijk\ell}Q_{ijk\ell}<0$ in   $H^\top$. As $Q_{ijk\ell}\neq0$  in   $H^\top$, the result follows from  Proposition \ref{des-type}.
\end{proof}

We recall the following two known lemmas on the types of a Hadamard matrix and its transpose.

\begin{lemma}[{\cite{kharT, khloe, MT}}]\label{Haad1}
There is no Hadamard matrix of order $n$ and  type $\lfloor\frac{n}{8}\rfloor$ if $n\notin\{4, 12\}$.
There is no Hadamard matrix of order $n$ and  type $0$ if $n\equiv 4
\pmod{8}$.
\end{lemma}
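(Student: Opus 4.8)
The plan is to prove both assertions via the counting identity that underlies the notion of type, namely the relation between $P_{ijk\ell}$ values and the pairwise inner products of rows. Let $H$ be a Hadamard matrix of order $n$ and write $\mu_t$ for the number of quadruples of rows of type $t$, for $t = 0, 1, \dots, \lfloor n/8 \rfloor$. Recall that for a quadruple of type $t$ we have $P_{ijk\ell} = n - 8t$. The first step is to set up the standard system of equations obtained by summing $P_{ijk\ell}$, $P_{ijk\ell}^{\,2}$, and $P_{ijk\ell}^{\,3}$ over all $\binom{n}{4}$ quadruples of rows, and evaluating each sum combinatorially using the orthogonality $\sum_r h_{ir} h_{jr} = n\,\delta_{ij}$. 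Expanding $\big(\sum_r h_{ir}h_{jr}h_{kr}h_{\ell r}\big)^m$ and collecting terms shows that these three power sums are fixed polynomials in $n$ (independent of the particular matrix); concretely one gets $\sum P_{ijk\ell} = 0$ if $n > 4$, and analogous closed forms for the square and cube sums. This yields three linear relations among the $\mu_t$'s together with the obvious $\sum_t \mu_t = \binom{n}{4}$.

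The second step specializes these relations. For the first claim, suppose toward a contradiction that $H$ has type $\lfloor n/8 \rfloor$, so $\mu_t = 0$ for $t < \lfloor n/8 \rfloor$ and hence $\mu_{\lfloor n/8\rfloor} = \binom{n}{4}$; in particular every quadruple has the same $P$-value $P_0 := n - 8\lfloor n/8\rfloor \in \{0, 4\}$. Then $\sum P_{ijk\ell} = \binom{n}{4} P_0$ must equal its forced value $0$, which already forces $P_0 = 0$, i.e. $n \equiv 0 \pmod 8$ and every quadruple of rows has $P_{ijk\ell} = 0$. But a Hadamard matrix in which all row-quadruples have $P = 0$ is extremely rigid: using the square-sum relation (which, once $P \equiv 0$ on rows, pins down a degenerate configuration) one derives that $n$ must be very small, the exceptions being exactly $n = 4$ (and the borderline case $n=12$, where $\lfloor 12/8\rfloor = 1$ and $P_0 = 12 - 8 = 4 \ne 0$, so the argument must instead be run through the explicit classification of Hadamard matrices of order $12$, all of which are equivalent to the Paley matrix and do contain a quadruple of type $1$). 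For the second claim, assume $n \equiv 4 \pmod 8$ and that $H$ has type $0$; then $\mu_0 = \binom{n}{4}$ and every quadruple has $P_{ijk\ell} = n$, which is impossible for $n > 4$ since it would say every four rows agree up to the global structure forcing $n \le 4$ — again read off from $\sum P_{ijk\ell} = 0 \ne \binom{n}{4}\, n$.

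The third step is to handle the bookkeeping cleanly. Rather than deriving the power-sum identities from scratch, I would simply cite \cite{kharT, khloe, MT} for the precise combinatorial evaluation of $\sum P_{ijk\ell}^{\,m}$ for $m = 1, 2, 3$, state the resulting linear system, and then observe that the hypotheses "type $= \lfloor n/8\rfloor$" or "type $= 0$ with $n \equiv 4 \pmod 8$" force the entire mass onto a single value of $t$, which is inconsistent with the system unless $n \in \{4, 12\}$ in the first case and $n = 4$ in the second. The small cases $n \in \{4, 8, 12\}$ are then checked directly against the known classification recorded in Table \ref{tab}.

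The main obstacle I anticipate is the step isolating $n \in \{4, 12\}$ as the only survivors of "$P \equiv 0$ on all row-quadruples": the first power-sum relation only rules out odd multiples of $4$, and one genuinely needs the second (and possibly third) moment to squeeze out all $n \ge 16$, which requires carefully expanding $\sum_{i,j,k,\ell} \big(\sum_r h_{ir}h_{jr}h_{kr}h_{\ell r}\big)^2$ and tracking the diagonal contributions where indices among $\{i,j,k,\ell\}$ or among the two copies of $r$ coincide. Getting the constants right there — and correctly excluding the degenerate quadruples with repeated indices from the count of "honest" type-$t$ quadruples — is the delicate part; everything else is linear algebra in four unknowns plus a table lookup.
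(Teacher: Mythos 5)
This lemma is quoted in the paper from \cite{kharT, khloe, MT} without proof, so there is no internal argument to compare against; judged on its own terms, your proposal has two genuine gaps. First, the moment identities you build on are not all available: $P_{ijk\ell}$ changes sign under row negation, so the odd power sums $\sum P_{ijk\ell}$ and $\sum P_{ijk\ell}^{\,3}$ over all quadruples are \emph{not} fixed polynomials in $n$. Concretely, the evaluation of $\sum_{\text{distinct}}\sum_r h_{ir}h_{jr}h_{kr}h_{\ell r}$ involves $\sum_r S_r^4$ with $S_r$ the $r$th column sum, which is not determined by $n$; for a column-normalized matrix one gets $\sum P_{ijk\ell}=n^2(n-1)(n-2)/24\neq 0$, while other representatives give other values. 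Relatedly, ``all quadruples have type $t$'' only forces $|P_{ijk\ell}|=n-8t$, not a common signed value $P_0$, so the step ``$\sum P=\binom{n}{4}P_0=0$ forces $P_0=0$'' collapses. Second, for the $n\equiv 4\pmod 8$ assertion you misread the definition of the type of a matrix: since $0$ is the minimal possible type, a Hadamard matrix is of type $0$ as soon as it contains \emph{one} quadruple of type $0$, so you cannot set $\mu_0=\binom{n}{4}$; you must rule out a single such quadruple.

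Both claims do follow from one correct moment identity, namely Lemma \ref{HxH} of the paper: with $n=4m$ and three rows fixed, $\sum_t \kappa_t(m-2t)^2=m^2$. This is the second moment $\sum_{\ell}P_{ijk\ell}^2=n^2$ with three rows fixed, and it is an honest invariant precisely because only $P^2$ appears. If every quadruple has type $\lfloor m/2\rfloor$, the left side equals $(n-3)\big(m-2\lfloor m/2\rfloor\big)^2$, which is $0$ for $m$ even and $4m-3$ for $m$ odd; equating to $m^2$ forces $(m-1)(m-3)=0$, i.e.\ $n\in\{4,12\}$. If $m$ is odd and some quadruple has type $0$, fix three of its rows: the fourth already contributes $m^2$, so every remaining row $\ell$ must have $(m-2t_\ell)^2=0$, impossible for odd $m$ once $n>4$. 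Your plan of attack (moment counting) is the right family of ideas, but it must be run on the squared, three-rows-fixed version rather than on global first and third moments, and the two hypotheses must be translated correctly (``all quadruples extremal'' versus ``one quadruple of type $0$'').
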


\begin{lemma}[\cite{kharT}]\label{Haad2}
Let  $H$ be a     Hadamard matrix of order $n$ having a quadruple of rows of type $1$.  Then, the type of $H^\top$ is     $0$ if $n\equiv 0
\pmod{8}$ and     $1$ if $n\equiv 4
\pmod{8}$.
\end{lemma}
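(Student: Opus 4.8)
The plan is to produce, after a convenient normalization of $H$, one explicit quadruple of columns of $H$ whose type is forced by the residue of $n$ modulo $8$, and then to deduce the type of $H^{\top}$ from it together with Lemma~\ref{Haad1}.

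First I would apply the normalization described after \eqref{defini}: up to negations and permutations of rows and columns, I may assume the given type-$1$ quadruple is rows $i,j,k,\ell$, displayed exactly as in \eqref{defini} with $t=1$ and $s=\frac{n}{4}-1$ (so $n\geqslant 8$); every operation used here preserves the type of $H^{\top}$ as well as the type of every quadruple of rows of $H$. The columns of $H$ then split into eight blocks $B_1,\dots,B_8$, in the order of \eqref{defini}, with $|B_1|=|B_4|=|B_6|=|B_7|=s$ and $|B_2|=|B_3|=|B_5|=|B_8|=1$; write $c_2,c_3,c_5,c_8$ for the unique columns of the four singleton blocks. The object to analyze is the quadruple $\{c_2,c_3,c_5,c_8\}$ regarded as a quadruple of rows of $H^{\top}$; its type equals $\frac{n-|P|}{8}$, where $P=\sum_{m=1}^{n}h_{mc_2}h_{mc_3}h_{mc_5}h_{mc_8}$.

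I would then compute $P$ one row at a time. For $m\in\{i,j,k,\ell\}$, the sign $h_{mc_2}h_{mc_3}h_{mc_5}h_{mc_8}$ is read directly off the patterns that $B_2,B_3,B_5,B_8$ exhibit on rows $i,j,k,\ell$ in \eqref{defini}, and each of the four values equals $+1$. For each of the remaining $n-4$ rows $m$, let $\gamma_p$ be the excess of row $m$ inside $B_p$ (number of $+1$'s minus number of $-1$'s), so $\gamma_2,\gamma_3,\gamma_5,\gamma_8\in\{-1,1\}$ and $h_{mc_2}h_{mc_3}h_{mc_5}h_{mc_8}=\gamma_2\gamma_3\gamma_5\gamma_8$. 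Writing the orthogonality of row $m$ with each of rows $i,j,k,\ell$ in terms of $\gamma_1,\dots,\gamma_8$ and solving the resulting linear system gives the single identity $\gamma_2-\gamma_3-\gamma_5+\gamma_8=2(\gamma_1+\gamma_2)$. Putting $a_m:=\gamma_1+\gamma_2$, the left-hand side is a sum of four terms in $\{-1,1\}$, so $a_m\in\{-2,-1,0,1,2\}$; and a short case check shows that four numbers in $\{-1,1\}$ summing to $2a_m$ have product $(-1)^{a_m}$, hence $\gamma_2\gamma_3\gamma_5\gamma_8=(-1)^{a_m}$. I expect this identity, and the resulting reduction of the row-$m$ contribution to $(-1)^{a_m}$, to be the crux of the argument; the rest is bookkeeping.

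Finally I would invoke a parity observation. Since $\gamma_1$ has the parity of $|B_1|=s$ and $\gamma_2$ is odd, every $a_m$ satisfies $a_m\equiv s+1\pmod{2}$, and $s=\frac{n}{4}-1$. If $n\equiv 0\pmod{8}$ then $s$ is odd, so each $a_m$ is even, hence $a_m\in\{-2,0,2\}$ and $(-1)^{a_m}=1$; summing over all $n$ rows gives $P=4+(n-4)=n$, so $\{c_2,c_3,c_5,c_8\}$ has type $0$ and therefore the type of $H^{\top}$ is $0$. If $n\equiv 4\pmod{8}$ then $s$ is even, so each $a_m$ is odd, hence $a_m\in\{-1,1\}$ and $(-1)^{a_m}=-1$; summing gives $P=4-(n-4)=-(n-8)$, so $\{c_2,c_3,c_5,c_8\}$ has type $1$ and the type of $H^{\top}$ is at most $1$. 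Since the second statement of Lemma~\ref{Haad1}, applied to $H^{\top}$, rules out type $0$ when $n\equiv 4\pmod{8}$, the type of $H^{\top}$ is exactly $1$ in that case, completing the proof.
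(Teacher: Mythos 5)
Your proof is correct. Note first that the paper itself gives no proof of this lemma: it is imported from \cite{kharT}, so there is nothing in the text to compare against line by line. Your argument is a valid, self-contained derivation, and it is essentially the classical one: normalize the type-$1$ quadruple to the form \eqref{defini} with $t=1$, and compute the type of the column quadruple formed by the four singleton blocks. I checked the details: the four rows $i,j,k,\ell$ each contribute $+1$ to $P$; your identity $\gamma_2-\gamma_3-\gamma_5+\gamma_8=2(\gamma_1+\gamma_2)$ follows from summing the four orthogonality relations (it is exactly the identity $y_1+y_2+y_3-y_4=-2x_1$ from the proof of Lemma~\ref{mod4}); the observation that four signs summing to $2a$ have product $(-1)^a$ is right; and the parity of $\gamma_1+\gamma_2$ is $s+1$ as you say, which forces $P=n$ when $n\equiv 0\pmod 8$ and $P=8-n$ when $n\equiv 4\pmod 8$, giving column-quadruple types $0$ and $1$ respectively. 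The final appeal to Lemma~\ref{Haad1} to upgrade ``type at most $1$'' to ``type exactly $1$'' in the residue-$4$ case is exactly what is needed. One small streamlining: rather than re-deriving the linear identity, you could invoke Lemma~\ref{mod4} directly with $t=1$, since then $y_1,y_2,y_3,y_4\in\{-1,1\}$ and $y_1+y_2+y_3-y_4\equiv 2-\tfrac{n}{2}\pmod 4$ immediately determines the sign $y_1y_2y_3y_4$ of each row's contribution; this makes the dependence on the residue of $n$ modulo $8$ completely transparent and shortens the bookkeeping.
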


\begin{theorem}\label{B+B-}
The following statements hold.
\begin{itemize}
\item[{\rm (i)}] There is no skew   Hadamard matrix of     skew type $(2, 0)$.
\item[{\rm (ii)}] There is no skew   Hadamard matrix of order $n$ and skew type $(1, 0)$ if $n\equiv 4 \pmod{8}$.
\item[{\rm (iii)}] There is no skew   Hadamard matrix of order $n$ and skew type $(1, 1)$ or $(1, 2)$ if $n\equiv 0 \pmod{8}$.
\end{itemize}
\end{theorem}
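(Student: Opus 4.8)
The plan is to exploit the fact that $H^\top$ is again skew Hadamard and to pit the transposition rules of Lemma~\ref{3part} against Lemma~\ref{Haad2}. Before handling the three parts I would record two observations to be used throughout. First, by definition a skew Hadamard matrix of skew type $(t,\varepsilon)$ is a Hadamard matrix of type $t$ that contains a quadruple of rows of skew type $(t,\varepsilon)$, and such a quadruple has type $t$. Second, a quadruple of rows of a skew Hadamard matrix that has type $0$ is necessarily of skew type $(0,2)$: it cannot arise from \eqref{defskewA} or from the $s>t$ branch of \eqref{defskewB}, because in either of those cases type $0$ would mean $t=0$ and hence a block of length $t-1=-1$, which is impossible; the only remaining possibility is the $s=0$ branch of \eqref{defskewB}, whose skew type is $(0,2)$.

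For part (i), I would suppose for contradiction that $H$ is skew Hadamard of order $n$ and skew type $(2,0)$, and fix a quadruple $\{i,j,k,\ell\}$ of its rows of skew type $(2,0)$. By Lemma~\ref{3part}(i) the quadruple $\{i,j,k,\ell\}$ of rows of $H^\top$ has skew type $(1,2)$, hence type $1$, so $H^\top$ has a quadruple of rows of type $1$. Applying Lemma~\ref{Haad2} to $H^\top$ then forces $(H^\top)^\top=H$ to have type $0$ or $1$, contradicting that $H$, being of skew type $(2,0)$, has type $2$.

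For part (ii), I would suppose $H$ is skew Hadamard of order $n\equiv 4\pmod 8$ and skew type $(1,0)$, and fix a quadruple $\{i,j,k,\ell\}$ of its rows of skew type $(1,0)$. Since $H$ has type $1$, it has a quadruple of rows of type $1$, so Lemma~\ref{Haad2} forces $H^\top$ to have type $1$. On the other hand, Lemma~\ref{3part}(i) shows that the quadruple $\{i,j,k,\ell\}$ of rows of $H^\top$ has skew type $(0,2)$, hence type $0$, so $H^\top$ has type $0$; this is a contradiction. For part (iii), I would suppose $H$ is skew Hadamard of order $n\equiv 0\pmod 8$ and skew type $(1,1)$ or $(1,2)$. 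The case $n=8$ is immediate because order-$8$ Hadamard matrices have type $0$ by Lemma~\ref{Haad1} and so cannot be of skew type $(1,1)$ or $(1,2)$; so I would assume $n\geqslant 16$. As $H$ has type $1$, it has a quadruple of rows of type $1$, so Lemma~\ref{Haad2} forces $H^\top$ to have type $0$. Choosing a quadruple $\{i,j,k,\ell\}$ of rows of $H^\top$ of type $0$, the second observation above gives that it has skew type $(0,2)$. Now I would apply Lemma~\ref{3part}(iii) to the skew Hadamard matrix $H^\top$ with the value $s=0$; since $n\equiv 0\pmod 8$ and $n\geqslant 16$, the value $s=0$ is none of $\tfrac n8$, $\tfrac{n-4}{8}$, $\tfrac n8-1$, so the quadruple $\{i,j,k,\ell\}$ of rows of $(H^\top)^\top=H$ has skew type $(1,0)$. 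This contradicts the fact that a skew Hadamard matrix of skew type $(1,1)$ or $(1,2)$ has no quadruple of rows of skew type $(1,0)$.

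The step I expect to need the most care is the bookkeeping: keeping the roles of $H$ and $H^\top$ straight when iterating Lemma~\ref{3part}, reading off the correct branch of Lemma~\ref{3part}(iii) for the congruence class and size at hand, and confirming that a type-$0$ quadruple of a skew Hadamard matrix really is forced into the admissible skew type $(0,2)$ rather than the inadmissible $(0,0)$ or $(0,1)$. Beyond that, each part is a short chain of deductions from Lemmas~\ref{3part}, \ref{Haad1}, and \ref{Haad2}, with no genuinely hard analytic step.
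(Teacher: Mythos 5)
Your proof is correct and follows essentially the same strategy as the paper: pitting Lemma~\ref{3part} against Lemmas~\ref{Haad1} and \ref{Haad2} via $H^\top$, with part (iii) in particular being virtually identical (and your explicit check that a type-$0$ quadruple must have skew type $(0,2)$ fills in a step the paper leaves implicit). The only differences are cosmetic: in (i) you apply Lemma~\ref{Haad2} directly to the type-$1$ quadruple of $H^\top$ instead of splitting into the cases where $H^\top$ has type $0$ or $1$ as the paper does, and in (ii) you reach the contradiction via Lemma~\ref{Haad2} where the paper invokes Lemma~\ref{Haad1}; both variants are valid.
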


\begin{proof}
Let  $H$ be a  skew   Hadamard matrix of order $n$.
Suppose the skew type of $H$ is $(2, 0)$. Then,  Lemma \ref{3part}(i)  implies  that $H^\top$ has   a quadruple of rows of  skew type $(1, 2)$ and so $H^\top$ is of type  $0$ or $1$. If $H^\top$ is of type    $1$, then, by applying    Lemma  \ref{Haad2} for  $H^\top$, we deduce that   $H$ is of type $0$ or $1$ which is  a contradiction, since the     skew type of $H$ is   $(2, 0)$. Therefore,  $H^\top$ is of type  $0$ and hence   $n\equiv 0 \pmod{8}$ by Lemma \ref{Haad1}. Since the skew type of $H^\top$ is  $(0, 2)$, Lemma \ref{3part}(iii)  implies that $H$ has a quadruple of rows of type $1$, which is again a contradiction, as the skew type of $H$ is   $(2, 0)$.

Now, let $n\equiv 4
\pmod{8}$.  If   $H$ is  of   skew type $(1, 0)$, then Lemma \ref{3part}(i) implies  that $H^\top$ has   a quadruple of rows of  skew type $(0, 2)$ and so $H^\top$ is of type  $0$, contradicting  Lemma  \ref{Haad1}.

Finally,  suppose  that     $n\equiv 0
\pmod{8}$ and   $H$ is  of skew type $(1, 1)$ or $(1, 2)$.  It follows from  Lemma \ref{Haad1} that  $n\neq8$.   Also,   Lemma \ref{Haad2} yields that the type of  $H^\top$ is $0$, so $H^\top$  has a quadruple of  skew type $(0, 2)$ rows. So, it follows from  Lemma \ref{3part}(iii)    that $H$ has a quadruple of rows of skew type $(1, 0)$. As  $H$ is of type $1$, we conclude that the skew type of $H$ is $(1, 0)$,  a contradiction.
\end{proof}

To proceed, we must recall the following result on the type of quadruples of rows of a Hadamard matrix.

\begin{lemma}[\cite{MT}]\label{HxH}
Let $H$ be a  Hadamard matrix of order $4m$.
Fix three rows of $H$  and let $\kappa_t$ be the number of other rows of type $t$ with these three rows.
Then,
$$\sum_{t=0}^{\left\lfloor\frac{m}{2}\right\rfloor}\kappa_t(m-2t)^2=m^2.$$
\end{lemma}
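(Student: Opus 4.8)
The plan is to fix the three rows, normalise them by column operations, and then reduce the claimed identity to Parseval's identity applied to one cleverly chosen $\{-1,1\}$-vector.

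First I would set up coordinates. Fix three rows, say rows $1,2,3$, and recall that column negations and column permutations preserve every type $T_{123\ell}$. Negating the columns where $h_{1r}=-1$ makes row $1$ all $+1$; orthogonality with row $1$ forces row $2$ to have $2m$ entries of each sign, so after a column permutation row $2$ is $+1$ on the first $2m$ columns and $-1$ on the last $2m$; orthogonality of row $3$ with rows $1$ and $2$ then forces row $3$ to split each half evenly, and after a further column permutation the columns are partitioned into four blocks $G_1,G_2,G_3,G_4$ of size $m$ indexed by the sign patterns of rows $2,3$. For any other row $h_\ell$, writing $x_i$ for its number of $+1$'s in $G_i$, orthogonality with rows $1,2,3$ gives $x_1+x_2+x_3+x_4=2m$, $x_1+x_2=x_3+x_4$ and $x_1+x_3=x_2+x_4$; solving, $x_1=x_4=a$ and $x_2=x_3=m-a$ for a single integer $a=a_\ell\in\{0,\dots,m\}$.

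Next I would connect $a_\ell$ to the type. A block-by-block computation gives $P_{123\ell}=\sum_r h_{1r}h_{2r}h_{3r}h_{\ell r}=4(2a_\ell-m)$, hence $|P_{123\ell}|=4\,|m-2a_\ell|$, and by the formula $T_{ijk\ell}=\frac{n-|P_{ijk\ell}|}{8}$ the type $t=T_{123\ell}$ satisfies $m-2t=|m-2a_\ell|$. In particular $0\le t\le\lfloor m/2\rfloor$, so the stated range of summation is exactly the right one, and $P_{123\ell}^2=16(m-2a_\ell)^2=16(m-2t)^2$.

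The decisive step is an averaging argument. Let $c\in\{-1,1\}^{4m}$ be the entrywise (Hadamard) product of rows $2$ and $3$, so $c_r=h_{2r}h_{3r}$. Since $h_{1r}=1$ we have $\langle c,h_\ell\rangle=\sum_r h_{2r}h_{3r}h_{\ell r}=P_{123\ell}$ for every row $h_\ell$; moreover $\langle c,h_1\rangle=\langle h_2,h_3\rangle=0$ and $\langle c,h_2\rangle=\sum_r h_{3r}=0=\sum_r h_{2r}=\langle c,h_3\rangle$ because rows $2$ and $3$ are orthogonal to the all-ones row $1$. The $4m$ rows of $H$ are pairwise orthogonal with squared norm $4m$, so Parseval's identity gives
\[
\sum_{\ell=1}^{4m}\langle c,h_\ell\rangle^2 \;=\; 4m\,\|c\|^2 \;=\; (4m)^2 .
\]
Dropping the three vanishing terms, substituting $\langle c,h_\ell\rangle^2=P_{123\ell}^2=16(m-2t_\ell)^2$ for the remaining rows, and dividing by $16$, we get $\sum_{\ell\notin\{1,2,3\}}(m-2t_\ell)^2=m^2$, which upon grouping the rows by their type with these three rows is precisely $\sum_{t=0}^{\lfloor m/2\rfloor}\kappa_t(m-2t)^2=m^2$.

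I expect the only genuine subtlety to be the middle step: verifying that a further row has block counts $(a,m-a,m-a,a)$ with a single free parameter, and that this parameter is tied to the invariant type through $m-2t=|m-2a|$, so that the squares match exactly and the index set of the sum is the right one. Once that is in place, the observation that the row-wise sum of the squares of $P_{123\ell}$ is just the squared norm of the entrywise product $h_2\odot h_3$ read in the orthonormal basis of rows — hence equals $(4m)^2$ — finishes the argument with essentially no further computation.
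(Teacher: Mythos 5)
Your proof is correct: the normalization of three rows into four blocks of size $m$, the identification $m-2t=|m-2a|$, and the Parseval computation $\sum_{\ell}\langle h_2\odot h_3,h_\ell\rangle^2=(4m)^2$ (equivalently, double-counting $\sum_\ell P_{123\ell}^2$ using $H^\top H=4mI$) all check out, and the three discarded terms do vanish as you claim. The paper itself gives no proof of this lemma — it is quoted from the reference \cite{MT} — and your argument is essentially the standard one found there, so there is nothing further to reconcile.
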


\begin{lemma}\label{|A|}
Let $H$ be a  skew  Hadamard matrix of order $n$. Then, the   number of all $4\times 4$  principal submatrices of  $H$ which are  {\sf SH}-equivalent   to $A$ is $n(n-1)(n-2)(n-4)/32$, where $A$ is introduced in \eqref{interAB}.
\end{lemma}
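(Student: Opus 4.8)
The plan is to count, in two ways, pairs consisting of a quadruple of rows of skew type $(t,1)$ together with the data recording their form. Recall from Proposition~\ref{des-type}(ii) that a quadruple $\{i,j,k,\ell\}$ has a $4\times 4$ principal submatrix {\sf SH}-equivalent to $A$ precisely when $Q_{ijk\ell}=0$. So the quantity we must compute is the number $N$ of quadruples $\{i,j,k,\ell\}$ with $Q_{ijk\ell}=0$, equivalently of skew type $(t,1)$ for some $t$; these are exactly the quadruples whose rows can be brought to the form \eqref{defskewA}. I would first translate $Q_{ijk\ell}=0$ into a clean combinatorial condition on the four chosen rows and the four special columns $i,j,k,\ell$. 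Since $H+H^\top=2I$, the entries $h_{ij},h_{ji}$ are forced ($h_{rr}=1$ and $h_{rs}=-h_{sr}$ for $r\neq s$), so the $4\times 4$ principal submatrix is a skew-symmetric $\{\pm1\}$-matrix with constant diagonal, hence {\sf H}-equivalent to $A$ or $B$ by \eqref{interAB}. The point is that the {\sf SH}-class (conjugation by a signed permutation) of this principal submatrix is detected by $Q_{ijk\ell}$ alone.

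Next, I would set up the double count. The natural approach, paralleling Lemma~\ref{HxH}, is: for each ordered choice of a row $i$, view the remaining structure. Actually the cleanest route is to fix three rows $i,j,k$ and count the number of $\ell$ making $\{i,j,k,\ell\}$ of skew type $(t,1)$. Here is where I expect the main work: one needs an identity, analogous to Lemma~\ref{HxH} but tracking the $Q$-parameter rather than just the $P$-parameter (type). Concretely, I would aim to show that for any fixed triple $i,j,k$, the number of rows $\ell$ with $Q_{ijk\ell}=0$ is a constant depending only on $n$, and then multiply by $\binom{n}{3}$ and divide by $4$ (since each quadruple is counted via four of its triples). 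To get that constant, combine Lemma~\ref{HxH} (which controls $\sum_t \kappa_t (m-2t)^2 = m^2$ with $m=n/4$) with a companion computation: the forms \eqref{defskewA} and \eqref{defskewB} show that among rows $\ell$ of a given type $t$ with the triple $i,j,k$, exactly the ones of skew type $(t,1)$ contribute to $N$, and one must count how many of the $\kappa_t$ rows of type $t$ are of each skew type. The skew-symmetry constraint $h_{\ell i}=-h_{i\ell}$, etc., pins down which of the three skew types $(t,0),(t,1),(t,2)$ row $\ell$ realizes, in terms of the already-fixed entries in columns $i,j,k$ — this is a finite case analysis on the four special columns.

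The main obstacle, then, is proving that the count of $\ell$ with $Q_{ijk\ell}=0$ is independent of the triple $i,j,k$. I would attack it by another averaging identity: sum $Q_{ijk\ell}$ (or a suitable quadratic expression in it) over all $\ell$ and over all triples, using orthogonality of the rows of $H$ to evaluate the sums in closed form. Since $Q_{ijk\ell}=\sum_{r\in\{i,j,k,\ell\}}h_{ir}h_{jr}h_{kr}h_{\ell r}$ involves only four entries, expanding $\sum_{\ell}Q_{ijk\ell}^2$ and using $H^\top H=nI$ together with the skew property should collapse everything to an expression in $n$; comparing with the case analysis above then forces each $\kappa_t$ to split into skew types in a triple-independent way. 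Finally, assembling: $N = \frac{1}{4}\binom{n}{3}\cdot c(n)$ for the resulting constant $c(n)$, and checking arithmetic gives $N = n(n-1)(n-2)(n-4)/32$, as claimed. A sanity check against small orders from Table~\ref{tab} (e.g. $n=4$ gives $0$, consistent with the single skew Hadamard matrix of order $4$ being $A$ itself with no proper quadruple, and $n=8,12$) would confirm the normalization.
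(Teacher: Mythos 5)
Your overall architecture coincides with the paper's: reduce the statement to counting quadruples with $Q_{ijk\ell}=0$, fix a triple of rows, count the rows $\ell$ completing it to such a quadruple, and double count with a division by $4$. The gap is exactly where you place ``the main work'': you never establish that the number of completions is a constant depending only on $n$, and the route you propose for it --- moment identities such as $\sum_{\ell}Q_{ijk\ell}^2$ combined with Lemma~\ref{HxH}, plus the claim that each $\kappa_t$ ``splits into skew types in a triple-independent way'' --- is neither carried out nor clearly sufficient. Knowing low moments of $Q$ over $\ell$ does not by itself give the number of $\ell$ with $Q=0$ unless you also control the value set of $Q$ (it is $\{0,\pm4\}$ here), and the finer splitting of each $\kappa_t$ by skew type is both stronger than what you need and unproved. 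As written, the central step of the proof is a plan rather than an argument.

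The missing idea is elementary and makes the whole detour unnecessary: unlike a quadruple, a \emph{triple} of rows of a skew Hadamard matrix of order $4m$ has a unique normal form under {\sf SH}-equivalence, with no free parameter playing the role of the type $t$. Conjugating by a signed permutation matrix, one brings the principal $3\times 3$ submatrix to the transitive form and makes the first row all ones on the remaining $4m-3$ columns; the three orthogonality relations then force the column patterns of rows $2,3$ on those columns to occur with multiplicities $m-1$, $m-1$, $m-1$, $m$. In this form, using skewness for the three diagonal columns,
$$Q_{123\ell}=-h_{1\ell}+h_{2\ell}-h_{3\ell}+h_{1\ell}h_{2\ell}h_{3\ell},$$
and a four-case check shows this vanishes for exactly the $3m-3$ columns outside the block of size $m$. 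That single computation is the paper's proof of the constancy you were after; the count $(3m-3)\binom{n}{3}/4=n(n-1)(n-2)(n-4)/32$ then follows as you describe.
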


\begin{proof}
For simplicity, let $n=4m$. It is easy to see that,
by a sequence of  row  negations and row permutations,   and   simultaneously,  column  negations  and column  permutations, every triple   of    rows   of   $H$    may be transformed   to    the   form
\begin{eqnarray*}\begin{array}{rrrrrrrrrrrrrrrrrr}
1& 1& 1&    \mathbbm{1}_{m-1}  &   \mathbbm{1}_{m-1} &  \mathbbm{1}_{m-1} & \mathbbm{1}_{m}   \\
-1& 1& 1&     \mathbbm{1}_{m-1}   &   \mathbbm{1}_{m-1} &  -\mathbbm{1}_{m-1} & -\mathbbm{1}_{m}    \\
-1& -1& 1&     \mathbbm{1}_{m-1}   &   -\mathbbm{1}_{m-1} &  -\mathbbm{1}_{m-1} & \mathbbm{1}_{m}
\end{array}.\end{eqnarray*}
One may   straightforwardly    check that    $Q_{123\ell}=0$  for the   quadruple  of rows  $\{1, 2, 3, \ell\}$  if and only if     $\ell\in\{4, \ldots, 3m\}$. This means that  the
quadruple  of rows  $\{1, 2, 3, \ell\}$   forms  a $4\times 4$  principal submatrix of  $H$ which is  {\sf SH}-equivalent   to $A$ if and only if     $\ell\in\{4, \ldots, 3m\}$. Therefore,  every triple of rows of $H$ is contained in exactly $3m-3$ quadruples of rows whose corresponding  $4\times 4$ principal submatrix is   {\sf SH}-equivalent to $A$.
By double counting, we derive that the number of all quadruples of rows of $H$  whose corresponding  $4\times 4$ principal submatrices are    {\sf SH}-equivalent to $A$  is equal to  $(3m-3){{n}\choose{3}}/4=n(n-1)(n-2)(n-4)/32$, as desired.
\end{proof}

\begin{theorem}\label{3-2-36}
There is no skew   Hadamard matrix of order  $36$ and skew type $(3, 2)$.
\end{theorem}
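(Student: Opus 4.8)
The plan is to assume for contradiction that $H$ is a skew Hadamard matrix of order $n=36$ and skew type $(3,2)$, and to reach a numerical contradiction by comparing, in $H$ and in $H^\top$, the distribution of row-quadruples among skew types. Since $H$ has skew type $(3,2)$ it is a Hadamard matrix of type $3$, so (as the type of a quadruple lies in $\{0,\dots,\lfloor n/8\rfloor\}=\{0,\dots,4\}$ and none is below $3$) every quadruple of rows of $H$ has type $3$ or $4$, and $H$ has no quadruple of rows of skew type $(3,0)$ or $(3,1)$.

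First I would count the type-$3$ row-quadruples of $H$. Applying Lemma~\ref{HxH} with $m=9$ to an arbitrary triple of rows, together with $\kappa_3+\kappa_4=33$ (the remaining rows all give quadruples of type $3$ or $4$), forces $\kappa_3=6$; a double count over triples then gives exactly $\binom{36}{3}\cdot6/4=10710$ row-quadruples of type $3$, hence $\binom{36}{4}-10710=48195$ of type $4$. Next, by Proposition~\ref{des-type}(ii) and the remark that $Q_{ijk\ell}=0$ exactly when the corresponding $4\times4$ principal submatrix is {\sf SH}-equivalent to the matrix $A$ of \eqref{interAB}, the row-quadruples of skew type $(t,1)$ are precisely those counted by Lemma~\ref{|A|}, that is, $36\cdot35\cdot34\cdot32/32=42840$ of them; since $H$ has no quadruple of type less than $3$ and none of skew type $(3,1)$, all $42840$ have skew type $(4,1)$. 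Therefore $H$ has only $48195-42840=5355$ row-quadruples of skew type $(4,0)$ or $(4,2)$.

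Then I would pass to $H^\top$, again a skew Hadamard matrix of order $36$, under the bijection identifying a row-quadruple of $H$ with the like-indexed row-quadruple of $H^\top$. Specializing Lemma~\ref{3part} to $n=36$ (so $n/8=4.5$, $(n-4)/8=4$, $n/8-1=3.5$) shows that this bijection sends skew types $(3,2)\mapsto(4,0)$, $(4,0)\mapsto(3,2)$, $(4,1)\mapsto(4,1)$, $(4,2)\mapsto(4,2)$; in particular every row-quadruple of $H^\top$ has type $3$ or $4$, and among the skew types occurring in $H$ only $(4,0)$ maps to a type-$3$ quadruple of $H^\top$. If $H$ has no row-quadruple of skew type $(4,0)$, then every row-quadruple of $H^\top$ has type $4$, so $H^\top$ is a Hadamard matrix of order $36$ and type $4$, contradicting Lemma~\ref{Haad1}. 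Otherwise $H^\top$ has a row-quadruple of type $3$, and as all its row-quadruples have type $3$ or $4$ it is a Hadamard matrix of type $3$; the type-$3$ quadruple count of the previous paragraph then applies verbatim to $H^\top$, giving it $10710$ row-quadruples of type $3$, and by the correspondence these are exactly the images of the skew-type-$(4,0)$ quadruples of $H$, so $H$ has $10710$ such quadruples, contradicting the bound $5355$.

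I expect the only delicate point to be the skew-type bookkeeping: verifying that a type-$3$ Hadamard matrix of order $36$ can display only the skew types $(3,2),(4,0),(4,1),(4,2)$ among its row-quadruples, and that transposition turns exactly one of them, $(4,0)$, into a type-$3$ quadruple. This is what upgrades ``number of $(4,0)$ quadruples of $H$ $=$ number of type-$3$ quadruples of $H^\top$'' from an inequality to an equality, and it is the heart of the contradiction; everything else is arithmetic with binomial coefficients.
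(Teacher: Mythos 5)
Your proof is correct and follows essentially the same route as the paper's: count the $10710$ type-$3$ and $48195$ type-$4$ row-quadruples via Lemma~\ref{HxH}, identify the $42840$ quadruples of skew type $(4,1)$ via Lemma~\ref{|A|}, and use Lemma~\ref{3part} to equate the number of $(4,0)$ quadruples of $H$ with the number of $(3,2)$ quadruples of $H^\top$, yielding the same arithmetic contradiction (the paper phrases it as a negative count of $(4,2)$ quadruples, you as $10710>5355$). Your explicit appeal to Lemma~\ref{Haad1} to rule out the case that $H^\top$ has only type-$4$ quadruples makes a step the paper leaves implicit, but the argument is otherwise identical.
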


\begin{proof}
Suppose by way of contradiction that  $H$ is a  skew   Hadamard matrix of order $36$ and skew type    $(3, 2)$. So, every quadruple of rows of $H$ has type  $3$ or $4$. Using the notation of Lemma \ref{HxH}, we have $9\kappa_3^2+\kappa_4=81$. Since $\kappa_3+\kappa_4=33$, we obtain  that $\kappa_3=6$ and $\kappa_4=27$. By double counting, we derive that the number of all quadruples of rows of $H$ of type  $3$ is $6{{36}\choose{3}}/4=10710$ and therefore, the number of all quadruples of rows of $H$ of type  $4$ is ${{36}\choose{4}}-10710=48195$.

The possible skew types for any  quadruple of  rows of $H$ are   $(3, 2)$, $(4, 0)$, $(4, 1)$,  $(4, 2)$.
Using   Lemma \ref{3part}, the possible skew types for any  quadruple of  rows of $H^\top$ are   $(3, 2)$, $(4, 0)$, $(4, 1)$,  $(4, 2)$ and therefore, $H^\top$ is of skew type    $(3, 2)$.
Since the number of all quadruples of rows of $H$ of type  $3$ is $10710$,  the number of all quadruples of rows of $H$ of skew type  $(3, 2)$   is $10710$. Also, Lemma \ref{3part} implies that the number of all quadruples of rows of $H$ of skew type  $(4, 0)$ is equal to the number of all quadruples of rows of $H^{\top}$ of skew type  $(3, 2)$. Therefore,  the number of all quadruples of rows of $H$ of skew type  $(4, 0)$  is  $10710$.

The number of all quadruples of rows of $H$ of skew type $(4, 1)$ is equal to the number of all quadruples of rows of $H$  whose corresponding   $4\times 4$ principal submatrices are     {\sf SH}-equivalent to $A$ which equals  $42840$, by   Lemma \ref{|A|}. Now,    the  number of all quadruples of  rows of $H$ of skew type $(4, 2)$ is  $48195-(10710+42840)<0$, a contradiction.
\end{proof}

We end this section with the following lemma, which is crucial in our computational search.

\begin{lemma}\label{mod4}
Let  $H$ be a     Hadamard matrix of order $n$ whose first four rows are in the form \eqref{defini}.  Let $(h_1,  \ldots, h_n)$ be a row of $H$ other than the first four rows and define
$$x_1=\sum_{i=1}^{s}h_i, \qquad x_2=\sum_{i=s+2t+1}^{2s+2t}h_i, \qquad x_3=\sum_{i=2s+3t+1}^{3s+3t}h_i, \qquad  x_4=\sum_{i=3s+3t+1}^{4s+3t}h_i$$
and
$$y_1=\sum_{i=s+1}^{s+t}h_i, \qquad y_2=\sum_{i=s+t+1}^{s+2t}h_i, \qquad y_3=\sum_{i=2s+2t+1}^{2s+3t}h_i, \qquad  y_4=\sum_{i=4s+3t+1}^{n}h_i.$$
Then, $x_1+x_2+x_3-x_4=2s-\frac{n}{2}\pmod{4}$ and $y_1+y_2+y_3-y_4=2t-\frac{n}{2}\pmod{4}$.
\end{lemma}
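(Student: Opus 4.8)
The plan is to use orthogonality of the chosen row with the first four rows, together with the obvious parity of a $\pm1$-sum. First I would record the column structure of \eqref{defini}: the columns split into eight consecutive blocks of sizes $s,t,t,s,t,s,s,t$, on each of which the first four rows are constant. Reading off the signs, $x_1,x_2,x_3,x_4$ are precisely the sums of $(h_i)$ over the four size-$s$ blocks, on which rows $1,2,3,4$ carry the sign patterns $(+,+,+,+)$, $(+,+,-,-)$, $(+,-,+,-)$, $(+,-,-,+)$, while $y_1,y_2,y_3,y_4$ are the sums over the four size-$t$ blocks, with sign patterns $(+,+,+,-)$, $(+,+,-,+)$, $(+,-,+,+)$, $(+,-,-,-)$. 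One checks directly that the index ranges in the statement match this block labelling.

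Next, orthogonality of $(h_1,\ldots,h_n)$ with the first four rows of \eqref{defini} gives four linear relations among $x_1,\ldots,x_4,y_1,\ldots,y_4$: the relation coming from the all-ones first row is $x_1+x_2+x_3+x_4+y_1+y_2+y_3+y_4=0$, and the other three are obtained from it by flipping the signs dictated by the patterns above. Adding the first relation to the second, third, and fourth, respectively, and subtracting the fourth from the first, I obtain the four clean identities $x_1+x_2+y_1+y_2=0$, $x_1+x_3+y_1+y_3=0$, $x_1+x_4+y_2+y_3=0$, and $x_2+x_3+y_1+y_4=0$.

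Solving the first three for $x_2,x_3,x_4$ in terms of $x_1$ and the $y_j$ and substituting into $x_1+x_2+x_3-x_4$ makes everything cancel except $-2y_1$, so $x_1+x_2+x_3-x_4=-2y_1$ holds as an exact equality; symmetrically, solving for $y_2,y_3,y_4$ and substituting into $y_1+y_2+y_3-y_4$ gives $y_1+y_2+y_3-y_4=-2x_1$. To finish, note that $x_1$ is a sum of $s$ elements of $\{-1,1\}$, hence $x_1\equiv s\pmod 2$, and likewise $y_1\equiv t\pmod 2$; therefore $-2y_1\equiv-2t\pmod 4$ and $-2x_1\equiv-2s\pmod 4$. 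Since $n=4(s+t)$ we have $-2t=2s-\frac{n}{2}$ and $-2s=2t-\frac{n}{2}$, which is exactly the pair of congruences claimed.

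As for the main obstacle: there is no substantive difficulty here --- the only thing demanding care is the combinatorial bookkeeping, i.e.\ matching the eight column blocks and their sign patterns to the eight sums $x_i,y_i$ and tracking signs when combining the four orthogonality equations. Once the exact identities $x_1+x_2+x_3-x_4=-2y_1$ and $y_1+y_2+y_3-y_4=-2x_1$ are in hand, reduction modulo $4$ is immediate.
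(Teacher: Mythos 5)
Your proposal is correct and follows essentially the same route as the paper: orthogonality with the four rows of \eqref{defini} yields the exact identities $x_1+x_2+x_3-x_4=-2y_1$ and $y_1+y_2+y_3-y_4=-2x_1$, after which reduction modulo $4$ via $x_1\equiv s$, $y_1\equiv t \pmod 2$ finishes the argument. The only (immaterial) difference is the choice of linear combinations of the four orthogonality relations used to isolate these identities.
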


\begin{proof}
Since $(h_1,  \ldots, h_n)$ is orthogonal to the four rows  given   in   \eqref{defini}, we obtain   that
\begin{align}
x_1+y_1+y_2+x_2+y_3+x_3+x_4+y_4&=0,\label{ell1}\\
x_1+y_1+y_2+x_2-y_3-x_3-x_4-y_4&=0,\label{ell2}\\
x_1+y_1-y_2-x_2+y_3+x_3-x_4-y_4&=0,\label{ell3}\\
x_1-y_1+y_2-x_2+y_3-x_3+x_4-y_4&=0 \label{ell4}.
\end{align}
By subtracting  \eqref{ell4} from the equality obtained by taking the sum of \eqref{ell1}--\eqref{ell3}, we conclude that $x_1+x_2+x_3-x_4=-2y_1$. Since   $2y_1=2t\pmod{4}$ and $s+t=\frac{n}{4}$,   one  deduces  that $x_1+x_2+x_3-x_4=2s-\frac{n}{2}\pmod{4}$.
Also, by summing \eqref{ell1}--\eqref{ell4}, we  get  $y_1+y_2+y_3-y_4=-2x_1$. Since $2x_1=2s\pmod{4}$ and $s+t=\frac{n}{4}$,   we get  $y_1+y_2+y_3-y_4=2t-\frac{n}{2}\pmod{4}$, the result follows.
\end{proof}

\section{Search using skew types}

In this section, we describe our search method for skew  Hadamard matrices, which uses the notion of skew types. Let $H$ be a skew  Hadamard matrix of order $n$ and skew type $(t, \varepsilon)$.
By Lemma  \ref{3part}, one can easily see that if $\varepsilon=0$, then $H^\top$ is of skew type $(t-1, 2)$. Therefore, the classification of skew  Hadamard matrices of skew type $(t,0)$ can be extracted from the
classification of skew  Hadamard matrices of skew type $(t-1,2)$. Hence, we may assume that  $\varepsilon=1,2$. Using  the forms \eqref{defskewA},  \eqref{defskewB} and by a permutation on  columns,
we may assume that the first four rows of   $H$ are of  the form \begin{eqnarray}\label{Aalg}\begin{array}{rrrrrrrrrrrrrrrrrr}
1& 1& 1& 1&   \mathbbm{1}_{t-1}  &   \mathbbm{1}_{t} &  \mathbbm{1}_t & \mathbbm{1}_{t-1} & \mathbbm{1}_{s-1} & \mathbbm{1}_{s-1} & \mathbbm{1}_s    & \mathbbm{1}_{s}  \\
-1& 1& 1& 1&   \mathbbm{1}_{t-1}   &   \mathbbm{1}_{t} &  -\mathbbm{1}_t & -\mathbbm{1}_{t-1} & \mathbbm{1}_{s-1} & \mathbbm{1}_{s-1} & -\mathbbm{1}_s    & -\mathbbm{1}_{s}  \\
-1& -1& 1& 1&   \mathbbm{1}_{t-1}   &   -\mathbbm{1}_{t} &  \mathbbm{1}_t & -\mathbbm{1}_{t-1} & \mathbbm{1}_{s-1} & -\mathbbm{1}_{s-1} & \mathbbm{1}_s    & -\mathbbm{1}_{s}  \\
-1& -1& -1& 1&   -\mathbbm{1}_{t-1}   &   \mathbbm{1}_{t} &  \mathbbm{1}_t  & -\mathbbm{1}_{t-1} & \mathbbm{1}_{s-1} & -\mathbbm{1}_{s-1} & -\mathbbm{1}_s   & \mathbbm{1}_{s}
\end{array}\end{eqnarray}
or
\begin{eqnarray}\label{Balg}\begin{array}{rrrrrrrrrrrrrrrrrr}
1& 1& 1& 1&   \mathbbm{1}_t  &   \mathbbm{1}_{t} &  \mathbbm{1}_{t} & \mathbbm{1}_{t} & \mathbbm{1}_{s-1} & \mathbbm{1}_{s-1} & \mathbbm{1}_{s-1}    & \mathbbm{1}_{s-1}  \\
-1& 1& 1& -1&   \mathbbm{1}_t   &   \mathbbm{1}_{t} &  -\mathbbm{1}_{t} & -\mathbbm{1}_{t} & \mathbbm{1}_{s-1} & \mathbbm{1}_{s-1} & -\mathbbm{1}_{s-1}    & -\mathbbm{1}_{s-1}  \\
-1& -1& 1& 1&   \mathbbm{1}_t   &   -\mathbbm{1}_{t} &  \mathbbm{1}_{t} & -\mathbbm{1}_{t} & \mathbbm{1}_{s-1} & -\mathbbm{1}_{s-1} & \mathbbm{1}_{s-1}    & -\mathbbm{1}_{s-1}  \\
-1&  1& -1& 1&   \mathbbm{1}_t   &   -\mathbbm{1}_{t} &  -\mathbbm{1}_{t}  & \mathbbm{1}_{t} & -\mathbbm{1}_{s-1} & \mathbbm{1}_{s-1} & \mathbbm{1}_{s-1}   & -\mathbbm{1}_{s-1}
\end{array}\end{eqnarray}
where $s=\frac{n}{4}-t\geqslant t$.

By applying  Lemma \ref{mod4}, if the first four rows of   $H$ are of the form    \eqref{Aalg}, then the column $4t+2$ of $H$  is uniquely determined from the columns $1,3,5,\dots,4t+1$ of $H$ and hence the row  $4t+2$ of $H$  is uniquely determined from the rows $1,3,5,\dots,4t+1$ of $H$.
Also, using    Lemma \ref{mod4},  if the first four rows of   $H$ are of the form    \eqref{Balg}, then the column $4t+4$ of $H$  is uniquely determined from the columns $5,\dots,4t+3$ of $H$ and thus the row $4t+4$ of $H$  is uniquely determined from the rows $5,\dots,4t+3$ of $H$.

To classify skew Hadamard matrices of skew type $(t, \varepsilon)$ for $\varepsilon=1,2$, we first fix the first four rows according to one of the forms \eqref{Aalg} and  \eqref{Balg}.     We then use a backtrack algorithm to construct the next rows one by one.
By the above paragraph, when we reach the row  $4t+2$ or $4t+4$,  depending on the form of the first four rows, we have pruning criteria for the backtracking. We compute this row from the preceding rows. If there is no solution,  we backtrack. Otherwise, we proceed to the next step.

During the backtracking algorithm,  we perform some checks. Regarding the number of solutions permitted in the primary steps of backtracking, we do the   {\sf SH}-equivalence test on the obtained partial skew Hadamard matrices.
Also,  we check for the skew type of partial skew Hadamard matrices whenever it is computationally not time-consuming.

After we obtain all solutions, we need to check them for the {\sf SH}-equivalence. It could take a long time because of the large number of solutions. To do this, we partition the set of solutions according to
their skew profiles, which count the number of quadruples of rows of different skew types. Then,  in each class, which is usually much smaller, we do the  {\sf SH}-equivalence test.

In order $36$, in view of  Lemma \ref{Haad1}, there is no skew Hadamard matrix of skew type $(0, \varepsilon)$ and $(4, \varepsilon)$ for any $\epsilon$.
By Lemma \ref{B+B-}, there is no such matrix of skew type $(1, 0)$ and $(2, 0)$. We also have no matrix of type $(3, 2)$, using  Lemma \ref{3-2-36}.  So,  we need to find solutions for skew types $(1, 1)$, $(1, 2)$, $(2, 1)$, $(2, 2)$, $(3, 0)$ and  $(3, 1)$.
We have  run the program for skew types $(1, 1)$, $(1, 2)$  and  $(2, 1)$.
The two skew types $(1, 1)$ and  $(1, 2)$ are fast enough to be run on a  single PC  for a few days.
The computation time for the skew type $(2,1)$ was about twenty days on a cluster of 48 cores. We also implemented some parts of the algorithm twice with different codes.
The remaining cases $ (2, 2) $, $ (3, 0) $ and $ (3, 1) $ need much more computational resources. We also run the program for all orders smaller than $36$. The results, which are given in Table \ref{tabskewtype},   confirm
the numbers shown in Table  \ref{tab}.  The obtained list of skew Hadamard matrices of order $36$ is available electronically at \cite{harada}.
We summarize our main result in the following proposition.

\begin{proposition}\label{SHskewthm}
There are at least     $157132$  {\sf SH}-inequivalent skew Hadamard matrices of order  $36$.
\end{proposition}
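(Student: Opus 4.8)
The plan is to run the backtrack search described above for the three skew types that are computationally feasible, namely $(1,1)$, $(1,2)$ and $(2,1)$, and to count the distinct {\sf SH}-equivalence classes produced; the claimed lower bound $157132$ is then simply the sum of the three partial counts. Concretely, I would first fix the first four rows of $H$ in the normalized shape \eqref{Aalg} or \eqref{Balg} appropriate to the target skew type (recall $s=\frac{n}{4}-t$ with $n=36$, so $(s,t)=(8,1)$ for the $(1,\varepsilon)$ cases and $(s,t)=(7,2)$ for $(2,1)$), then extend row by row with the backtrack routine, using Lemma \ref{mod4} to pin down row $4t+2$ (form \eqref{Aalg}) or row $4t+4$ (form \eqref{Balg}) from the earlier rows as a pruning device. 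Along the way I would apply partial {\sf SH}-equivalence tests on prefixes, and a skew-type filter whenever cheap, exactly as outlined in Section~3.

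The key steps, in order, are: (1) verify that the only admissible skew types in order $36$ are $(1,1),(1,2),(2,1),(2,2),(3,0),(3,1)$, which follows from Lemma \ref{Haad1} (killing $(0,\varepsilon)$ and $(4,\varepsilon)$), Theorem \ref{B+B-} (killing $(1,0)$ and $(2,0)$), and Theorem \ref{3-2-36} (killing $(3,2)$), so that a complete classification of $(1,1),(1,2),(2,1)$ genuinely contributes to a lower bound; (2) run the backtrack for each of these three types to completion, collecting every solution; (3) reduce each solution list modulo {\sf SH}-equivalence, first bucketing by the skew profile (the vector counting quadruples of rows of each skew type, which is an {\sf SH}-invariant by the invariance of $P_{ijk\ell}Q_{ijk\ell}$ and of the type), then doing pairwise {\sf SH}-equivalence tests inside each bucket; (4) add the three resulting class counts. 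A useful internal consistency check is to run the same program for all orders below $36$ and confirm agreement with Table~\ref{tab}, and to re-implement the critical steps independently, as the authors report doing.

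The main obstacle is entirely computational rather than conceptual: the skew type $(2,1)$ search produces a very large solution set, and both the backtrack itself and the subsequent {\sf SH}-equivalence reduction are expensive — the authors quote roughly twenty days on a 48-core cluster. Making this tractable hinges on two things: aggressive pruning during backtracking (the Lemma \ref{mod4} forced-row criterion, early {\sf SH}-equivalence tests on short prefixes, and skew-type/type checks), and an effective invariant for partitioning the final list so that the quadratic-cost pairwise equivalence test is only applied within small blocks. Correctness of the final number then rests on (a) the backtrack being exhaustive up to the column/row symmetries that have been factored out, and (b) the equivalence reduction being sound, i.e.\ the chosen invariants never merging inequivalent matrices — which is why only a \emph{lower} bound is asserted, since the remaining skew types $(2,2)$, $(3,0)$, $(3,1)$ are left unsearched.
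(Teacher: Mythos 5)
Your proposal matches the paper's own argument: the authors classify skew types $(1,1)$, $(1,2)$ and $(2,1)$ by exactly the backtrack-plus-canonicity procedure you describe (with Lemma \ref{mod4} as the pruning device and the skew profile as the bucketing invariant), and the bound $157132$ is the sum $23260+123326+10546$ of the resulting class counts from Table \ref{tabskewtype}, which are pairwise {\sf SH}-inequivalent because skew type is an {\sf SH}-invariant. The only element you omit is the paper's independent cross-check via the orderly-generation search of Section 4, but that is corroboration rather than part of the proof.
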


\begin{table}[!htb]
\begin{center}
{\footnotesize
\begin{tabular}{|c|c|c|c|c|c|c|c|c|}\hline
\backslashbox{{\footnotesize Order}}{{\footnotesize Skew Type}}
&{(0, 2)}&{(1, 0)}&{(1, 1)}&{(1, 2)}&{(2, 1)}&{(2, 2)}&{(3, 0)}&{(3, 1)}\\\hline
4 &1&0&0&0&0&0&0&0\\\hline
8 &1&0&0&0&0&0&0&0\\\hline
12 &0&0&1&0&0&0&0&0\\\hline
16 &2&0&0&0&0&0&0&0\\\hline
20 &0&0&1&1&0&0&0&0\\\hline
24 &14&1&0&0&1&0&0&0\\\hline
28 &0&0&43&21&0&1&0&0\\\hline
32 &6903&283&0&0&40&0&0&1\\\hline
36 &0&0&23260&123326&10546&\texttt{?}&\texttt{?}&\texttt{?}\\\hline
\end{tabular}
}
\caption{The number of    {\sf SH}-inequivalent  skew Hadamard matrices   of orders  up to   $36$ for all possible   skew types.}\label{tabskewtype}
\end{center}
\end{table}

\section{Search through the canonical method}

This section uses the orderly generation method to classify skew Hadamard matrices of order  $36$.
The technique was independently
introduced in      \cite{FAR} and \cite{REA}.
The method consists of two parallel subroutines. These are the construction of objects and
the rejection of equivalence copies.
The most natural and widely used method for the construction phase is backtracking, which has
quite an old history.
We use the notion of `canonical form' to reject equivalence copies.
A canonical form is a special representative for each equivalence class. The canonical forms are constructed
step by step using backtracking, and
the canonicity test is carried out at each step.

Here, we present the canonical form for skew Hadamard matrices. For a skew Hadamard matrix $H=[h_{ij}]$ of  order $n$,  assign a  vector
$$v(H)=\big(h_{1,2}, h_{1,3}, h_{2,3}, h_{1,4}, h_{2,4}, h_{3,4}, \ldots, h_{n-1,n}\big).$$
We say that $H$ is in the {\sl canonical form} if
$v(P^\top HP) \preccurlyeq v(H)$
for any signed permutation matrix $P$, where $\preccurlyeq$ denotes  the lexicographical order.

We used an orderly algorithm with backtracking and the above canonical form to generate  {\sf SH}-equivalence classes of
skew Hadamard matrices of order $36$. Using this orderly algorithm, we first find all solutions for the   $14\times 14$ leading principal submatrices in canonical form.
In total, there are $80122802$ solutions for  $14\times 14$ matrices. For each solution, we find all $14\times 36$ partial Hadamard matrices constituting the first  $14$ rows of desired skew Hadamard matrix of order $36$  using an exhaustive search.
Then,  for each obtained  $14\times 36$  partial Hadamard matrix and  $i=15, \ldots, 36$, we complete
every row $i$   by solving a system of linear equations coming from the orthogonality row $i$ to the first 14 rows.  To speed up the computation, we solve the system of linear equations in modulo   $2$.
Finally, for the complete solutions, we do the canonicity test.
We have not been able to run the computation for all $80122802$  solutions. It is done for about $10$ million of them.
The approximate computation time was three months on a cluster of 160 cores. We found $157132$ skew Hadamard matrices of order $36$ up to the  {\sf SH}-equivalence.
The list matches the list obtained in the previous section.

Denote by $J_k$   the skew-symmetric matrix of order  $k$    whose all entries on and above the main diagonal are $1$.
For example, the matrix $A$ given in \eqref{interAB} equals $J_4$.
The number of solutions for the $8\times 8$  leading principal submatrices of skew Hadamard matrices of order  $36$  in the canonical form is   $9$.
Our computation shows that only one of these $9$  solutions, $J_8$,   is
extendable to skew Hadamard matrices.  So, every skew Hadamard matrix of order $ 36 $ in canonical form contains $ J_8$.
Note that $J_8$ has $11$  feasible extensions to $9\times 9$ submatrices of skew Hadamard matrices of order  $36$. Let us show these solutions by $S_0=J_9$, $S_1,\ldots,S_{10}$ from the largest to smallest in the
canonical form. Our results show that $S_6,\ldots,S_{10}$ have no extensions. For $S_0$ and $S_1$,  we found the complete list of extensions. However,  the search is not complete for $S_2$. Also, we have not done anything for $S_3$, $S_4$  and $S_5$.

Inspired by the previous paragraph, we may ask for the maximum number  $k(n)$ such that $J_{k(n)}$ appears as a principal submatrix of a skew Hadamard matrix of order $n$.
This is related to  a  problem posed in \cite{erd}: What is the maximum number $t(n)$ such that every tournament of order $n$   contains the  transitive
tournament of order $t(n)$ as a subtournament?
The following theorem presents an upper bound on $k(n)$, which is especially tight for $n=36$.

\begin{theorem}
For any $n$, $$k(n)\leqslant\left\lfloor\frac{\pi}{2\cot^{-1}\big(\sqrt{n-1}\big)}\right\rfloor.$$
\end{theorem}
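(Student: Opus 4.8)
The plan is to translate the combinatorial statement into a spectral one. Write a skew Hadamard matrix $H$ of order $n$ as $H=I_n+K$ with $K$ real and skew-symmetric; from $HH^\top=nI_n$ together with $H+H^\top=2I_n$ we get $(I_n+K)(I_n-K)=nI_n$, hence $K^2=-(n-1)I_n$. Consequently $iK$ is a Hermitian matrix with $(iK)^2=(n-1)I_n$, so its spectrum is contained in $\{\sqrt{n-1},-\sqrt{n-1}\}$ and in particular $\lambda_{\max}(iK)=\sqrt{n-1}$ (the case $n=1$ being trivial). Next observe that $J_k-I_k$ is exactly the $k\times k$ skew-symmetric ``sign matrix'' $C_k$ with $(C_k)_{ij}=1$ for $i<j$ and $(C_k)_{ij}=-1$ for $i>j$. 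Hence, if $J_k$ occurs as a principal submatrix of $H$ (on some index set, up to reordering the index set), then a matrix similar to $C_k$ occurs as a principal submatrix of $K$, and therefore a matrix similar to $iC_k$ occurs as a principal submatrix of the Hermitian matrix $iK$.

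The second ingredient is the spectrum of $C_k$. I would compute it directly: if $C_kv=\lambda v$ with $v=(v_1,\dots,v_k)$, then subtracting consecutive rows of the eigenvalue equation gives $\lambda(v_j-v_{j+1})=v_j+v_{j+1}$ for $1\leqslant j\leqslant k-1$, so $v_{j+1}=r\,v_j$ with $r=\frac{\lambda-1}{\lambda+1}$; that is, $v$ is geometric. Substituting $v_j=r^{j-1}$ back into the first row of $C_kv=\lambda v$ and using $r^k=-1$ (which one obtains by also using the last row and eliminating $\lambda$) shows that all the equations are satisfied. Thus $r=e^{i(2\ell-1)\pi/k}$ and $\lambda=\frac{1+r}{1-r}=i\cot\!\big(\tfrac{(2\ell-1)\pi}{2k}\big)$ for $\ell=1,\dots,k$, giving $k$ distinct eigenvalues (the value $\lambda=0$, present when $k$ is odd, arising as $\ell=(k+1)/2$). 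Consequently the Hermitian matrix $iC_k$ has eigenvalues $-\cot\!\big(\tfrac{(2\ell-1)\pi}{2k}\big)$, the largest of which is $\cot\!\big(\tfrac{\pi}{2k}\big)$, attained at $\ell=k$. Alternatively, this is a known evaluation of the spectrum of this Toeplitz skew-symmetric matrix and can simply be cited.

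Finally, the bound is assembled by eigenvalue interlacing. Since (a matrix similar to) $iC_k$ is a principal submatrix of the Hermitian matrix $iK$, Cauchy's interlacing theorem yields $\lambda_{\max}(iC_k)\leqslant\lambda_{\max}(iK)$, i.e. $\cot\!\big(\tfrac{\pi}{2k}\big)\leqslant\sqrt{n-1}$. As $\cot$ is strictly decreasing on $(0,\pi)$ and both $\tfrac{\pi}{2k}$ and $\cot^{-1}\!\big(\sqrt{n-1}\big)$ lie in $\big(0,\tfrac{\pi}{2}\big)$, this is equivalent to $\tfrac{\pi}{2k}\geqslant\cot^{-1}\!\big(\sqrt{n-1}\big)$, hence $k\leqslant\tfrac{\pi}{2\cot^{-1}(\sqrt{n-1})}$, and since $k$ is an integer, $k\leqslant\big\lfloor\tfrac{\pi}{2\cot^{-1}(\sqrt{n-1})}\big\rfloor$. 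Applying this with $k=k(n)$ gives the claim. The one place needing care is the spectral computation for $C_k$: checking that the geometric ansatz produces genuine eigenvectors, that all $k$ eigenvalues (and multiplicities) are accounted for, that the degenerate cases $r=1$ and $\lambda=-1$ cannot occur, and handling the $\lambda=0$ case when $k$ is odd; the rest is routine.
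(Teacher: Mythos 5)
Your proposal is correct and follows essentially the same route as the paper: pass to the Hermitian matrix $\mathbbm{i}(I-H)$ (your $\mathbbm{i}K$ differs only by sign), identify the eigenvalues of the corresponding $k\times k$ principal submatrix as $\pm\cot\big(\tfrac{(2\ell-1)\pi}{2k}\big)$, and conclude $\cot\big(\tfrac{\pi}{2k}\big)\leqslant\sqrt{n-1}$ by Cauchy interlacing. The only difference is that you derive the spectrum of the negacirculant block directly via the geometric ansatz, where the paper cites a known result; your computation is correct, so this is merely a self-contained variant of the same argument.
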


\begin{proof}
Let $H$ be a skew Hadamard matrix of order $n$. It is easy to see  that  $A=\mathbbm{i}(I-H)$ is a Hermitian matrix with $\frac{n}{2}$ eigenvalues $\sqrt{n-1}$ and  $\frac{n}{2}$ eigenvalues $-\sqrt{n-1}$, where  $\mathbbm{i}$ is the unit imaginary complex number. Let $k=k(n)$ and $B=\mathbbm{i}(I-J_k)$.  As   $J_k$ is a principal submatrix of $H$,  $B$   is a principal submatrix of $A$. Since  $B$ is a negacirculant matrix, it straightforwardly follows from a result of \cite{nega} that the eigenvalues of $B$ are $\cot(\frac{\pi\ell}{2k})$ for  $\ell=1, 3, \ldots, 2k-1$. By the interlacing theorem, the largest eigenvalue of $B$ is less than or equal to the  largest eigenvalue of $A$, that is,    $\cot(\frac{\pi}{2k})\leqslant\sqrt{n-1}$. This yields the desired inequality.
\end{proof}

\begin{remark}
Whether or not our list of skew Hadamard matrices of order  $36$  is complete is a question. Referring to Table \ref{tabskewtype}, we strongly feel that no skew Hadamard matrices of order $36$ exist with skew types $(2, 2)$, $(3, 0)$ and  $(3, 1)$.
\end{remark}

\section{Ternary codes}

Self-dual codes are one of the most interesting classes of codes.
This interest is justified by many combinatorial objects
and algebraic objects related to self-dual codes,  see, for example,    \cite{RS-Handbook}.
In this section, we classify ternary self-dual codes constructed from
the $157132$ skew Hadamard matrices of order $36$ given in Table \ref{tabskewtype}.
We also study unimodular lattices and $1$-designs built from the ternary near-extremal self-dual codes.

\subsection{Ternary self-dual codes}

Let $\FF_3=\{0,1,2\}$ denote the finite field of order $3$.
A {\sl ternary} $[n,k]$ {\sl code} $C$ is a $k$-dimensional vector subspace of $\FF_3^n$.
The parameters $n$ and $k$ are called the {\sl length} and {\sl dimension} of $C$, respectively.
A ternary {\sl  self-dual} code $C$ of length $n$
is a ternary $[n,n/2]$ code satisfying
$C=C^\perp$, where $C^\perp$ is the dual code  of $C$ under  the standard inner product.
It was shown in \cite{MS-bound} that
the minimum weight $d$ of a ternary self-dual code of length $n$
is bounded by $d\leqslant 3 \lfloor n/12 \rfloor+3$.
If $d=3\lfloor n/12 \rfloor+3$ (respectively,  $d=3\lfloor n/12 \rfloor$),
then the code is called {\sl extremal} (respectively,  {\sl near-extremal}).
For length $36$, the Pless symmetry code is a currently known extremal ternary self-dual code.

Two ternary codes $C$ and $C'$ are said to be {\sl equivalent} if there exists a
monomial matrix $P$ over $\FF_3$ with $C' = \{ x P \, | \,  x \in C\}$,
otherwise, they are called  {\sl inequivalent}.
Here, a monomial matrix is a
matrix with entries in $\FF_3$  with exactly one nonzero entry in each row and each column.
Let $H$ be a Hadamard matrix of order $n$.
Throughout this section, let $C(H)$ denote the ternary code generated by the rows of $H$,
where the entries of the matrix are regarded as elements of $\FF_3$.
It is trivial that $C(H)$ and $C(K)$ are equivalent if
$H$ and $K$ are {\sf SH}-equivalent skew Hadamard matrices.

Although the proof of the following lemma is somewhat trivial,
we give it for the sake of completeness.

\begin{lemma}
Let $H$ be a skew Hadamard matrix of order $n$.
If $n \equiv 0 \pmod{12}$, then $C(H)$ is self-dual.
\end{lemma}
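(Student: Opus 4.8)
The plan is to show that $C(H)$ is self-orthogonal and has the right dimension, from which self-duality follows immediately since a self-orthogonal code of dimension $n/2$ in $\FF_3^n$ must equal its dual.

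First I would verify self-orthogonality. Let $r_i$ and $r_j$ be two rows of $H$, viewed as vectors over $\FF_3$. Their $\FF_3$-inner product is $\sum_k h_{ik}h_{jk} \bmod 3$, which is the ordinary integer inner product of rows $i$ and $j$ of $H$ reduced mod $3$. If $i=j$, this equals $n \equiv 0 \pmod{3}$ since $12 \mid n$; if $i \neq j$, the Hadamard property $HH^\top = nI$ gives inner product $0$. Hence every pair of rows (including a row with itself) is orthogonal mod $3$, so $C(H) \subseteq C(H)^\perp$, i.e. $C(H)$ is self-orthogonal. In particular $\dim C(H) \leqslant n/2$.

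Next I would pin down the dimension, which is the only place the skew hypothesis is used. Write $H = I + N$ where $N = H - I$ is skew-symmetric, so $N^\top = -N$ and the diagonal of $N$ is zero. Over $\FF_3$ we have $-1 = 2$, so it suffices to compute the $\FF_3$-rank of $H$. From $HH^\top = nI$ and $n \equiv 0 \pmod 3$ we already know $\rank_{\FF_3}(H) \leqslant n/2$. For the reverse inequality, consider $H H^\top = H(I - N)$ using $H^\top = 2I - H = -I + (2I - H) $; more cleanly, $H + H^\top = 2I$ gives $H^\top = 2I - H$, so over $\FF_3$, $H H^\top = H(2I - H) = 2H - H^2$. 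But $HH^\top = nI \equiv 0$, hence $H^2 = 2H = -H$ over $\FF_3$, i.e. $H(H+I) = 0$. Since $H + I = N$ has zero diagonal and $H = I + N$, one checks $\det H \not\equiv 0$ is impossible (consistent with rank $\leqslant n/2$), so instead I would argue directly: the relation $H^2 = -H$ shows $\FF_3^n = \ker H \oplus \operatorname{im} H$ with $H$ acting as $0$ on $\ker H$ and as $-\mathrm{id}$ on $\operatorname{im} H$, whence $\operatorname{im} H = \ker(H+I)$ and $\ker H = \ker(H-I) \cdot$ wait — more simply, $\rank H = \rank H^2$, and combined with an analogous computation for $H^\top$ one gets $\rank_{\FF_3} H \geqslant n/2$. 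Cleaner still: $H + H^\top = 2I$ is invertible over $\FF_3$, so $\operatorname{im} H + \operatorname{im} H^\top = \FF_3^n$, giving $\rank H + \rank H^\top \geqslant n$; since $\rank H = \rank H^\top$ (as $H^\top$ is the transpose) and each is $\leqslant n/2$, we conclude $\rank_{\FF_3} H = n/2$.

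Thus $\dim C(H) = n/2$ and $C(H)$ is self-orthogonal, so $C(H) = C(H)^\perp$, i.e. $C(H)$ is self-dual. The main obstacle is the dimension count: self-orthogonality is immediate from the Hadamard identity and $3 \mid n$, but showing the rank is exactly $n/2$ (rather than smaller) genuinely uses that $H$ is skew, via the invertibility of $H + H^\top = 2I$ over $\FF_3$; the rest is routine linear algebra over $\FF_3$.
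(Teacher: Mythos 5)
Your proposal is correct and follows essentially the same route as the paper: self-orthogonality (hence $\dim C(H)\leqslant n/2$) from $HH^\top=nI\equiv 0\pmod 3$, and the lower bound $\rank_3(H)\geqslant n/2$ from the subadditivity of rank applied to the invertible matrix $H+H^\top=2I$. The intermediate detours (the relation $H^2=-H$, the aside about $\det H$) are unnecessary and could be deleted, but the final ``cleaner still'' argument you settle on is exactly the paper's.
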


\begin{proof}
Since  $n \equiv 0 \pmod{12}$, $HH^\top \equiv   0_n  \pmod 3$,
where $0_n$ denotes the $n\times n$ zero matrix.
This implies that  $C(H) \subseteq C(H)^\perp$.
From  $\dim C(H) + \dim  C(H)^\perp=n$, we find  that $\dim C(H)\leqslant \frac{n}{2}$.
On the other hand,
since $H+H^\top=2I$,
$$n=\rank_3(2I)=\rank_3\left(H+H^\top\right)\leqslant \rank_3(H)+\rank_3\left(H^\top\right) =2\rank_3(H),$$
where $\rank_{3}(X)$ denotes the  rank of  the matrix  $X$ regarded over  $\FF_3$.
Thus, $\rank_3(H)\geqslant \frac{n}{2}$ which yields  that $\dim C(H)\geqslant \frac{n}{2}$.
Therefore,  $\dim C(H)=  \frac{n}{2}$ and   so $C(H) = C(H)^\perp$, meaning that    $C(H)$ is self-dual.
\end{proof}

We describe how to classify ternary self-dual codes of length $36$ constructed from
the $157132$ skew Hadamard matrices of order $36$ given in Table \ref{tabskewtype}.
By comparing pairs $(A_6,A_9)$, the $157132$  ternary self-dual codes are divided into $829$ classes,
where $A_w$ denotes the number of codewords of weight $w$.
For each class, to test the equivalence of ternary self-dual codes,
we used the method given in Section 7.3.3 of  \cite{KO} as follows.
For a ternary self-dual code $C$, define
the vertex-colored graph $\mathnormal{\Gamma}(C)$ with vertex set
$C_9 \cup (\{1,2,\dots,36\}\times (\FF_3\setminus\{0\}))$
and edge set
$\{\{c,(i,c_i)\} \, | \,  c=(c_{1}, \ldots,c_{36}) \in C_9, i\in \{1, \ldots,  36\}   \text{ and }    c_i \ne 0\}
\cup \{\{(i,y),(i,2y)\} \, | \,
i\in \{1, \ldots,  36\}   \text{ and }     y \in \FF_3\setminus\{0\}\}$,
where $C_9$ denotes the set of codewords of weight $9$ in $C$.
Suppose that two ternary self-dual codes $C$ and $C'$ are generated by
codewords of weight $9$.
Then, $C$ and $C'$ are equivalent if and only if $\mathnormal{\Gamma}(C)$ and $\mathnormal{\Gamma}(C')$  are isomorphic.
We verified that the $157129$ ternary self-dual codes of length $36$
are generated by codewords of weight $9$  and the remaining three codes
are not generated by codewords of weight $9$.
For the three codes, we have that $(A_6,A_9)=(36, 464)$, $(96, 224)$ and $(96, 1520)$.
Thus, each of the three codes is inequivalent to the other codes.
We used {\sf nauty} \cite{nauty}
for vertex-colored graph isomorphism testing corresponding to the $157129$ ternary self-dual codes.
Then, we have the following classification of
ternary self-dual codes of length $36$ constructed from
the $157132$ skew Hadamard matrices of order $36$.

\begin{proposition}\label{code}
$153979$ inequivalent ternary self-dual codes of length $36$ are constructed from
the $157132$ skew Hadamard matrices of order $36$ given in Table \ref{tabskewtype}.
$20848$ of them have minimum weight $9$ and the others have minimum weight $6$.
\end{proposition}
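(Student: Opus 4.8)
The plan is to reduce the statement to a finite computation along exactly the lines sketched before the proposition, and to make explicit the checks that certify it. First I would form, for each of the $157132$ skew Hadamard matrices $H$ of order $36$ listed in Table \ref{tabskewtype}, the ternary code $C(H)$ spanned by the rows of $H$. Since $36\equiv 0\pmod{12}$, each $C(H)$ is a ternary $[36,18]$ self-dual code by the lemma above establishing self-duality, and {\sf SH}-equivalent matrices produce equivalent codes. The task is then to count how many equivalence classes of codes occur in this list, and to determine their minimum weights.

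Next I would compute, for every $C(H)$, the pair $(A_6,A_9)$ of weight counts. Since equivalent codes have the same weight distribution, this pair is an equivalence invariant, so it partitions the $157132$ codes into blocks — $829$ of them — each of which is a union of equivalence classes; this reduces the problem to testing equivalence inside each (now small) block. For the test I would use the vertex-colored graph $\mathnormal{\Gamma}(C)$ defined above together with the criterion of \cite{KO}: when $C$ and $C'$ are both generated by their weight-$9$ codewords, $C$ and $C'$ are equivalent if and only if $\mathnormal{\Gamma}(C)$ and $\mathnormal{\Gamma}(C')$ are isomorphic, which {\sf nauty} \cite{nauty} decides. So I would first check which codes are generated by their weight-$9$ codewords; for the $157129$ that are, I run the graph-isomorphism tests block by block. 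The $3$ codes that are not generated by weight-$9$ codewords have $(A_6,A_9)\in\{(36,464),(96,224),(96,1520)\}$, values realized by no other code in the list, so they form three singleton classes and need no graph test. Collating the results gives the count $153979$.

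Finally, for one representative of each of the $153979$ classes I would compute the minimum weight (or the full weight distribution). A ternary self-dual code has all weights divisible by $3$, and a ternary self-dual code of length $36$ has minimum weight at most $3\lfloor 36/12\rfloor+3=12$, so a priori the minimum weight lies in $\{3,6,9,12\}$; the computation shows it equals $6$ or $9$ for every representative, with value $9$ for exactly $20848$ of them, which is the assertion.

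The only genuine obstacle is computational: the number of codes and the size of the graphs $\mathnormal{\Gamma}(C)$ make the isomorphism phase the bottleneck, so the block reduction by $(A_6,A_9)$ is essential to feasibility. On the rigor side one must be careful to note that the $(A_6,A_9)$-partition can only coarsen, never split, the equivalence partition (as $(A_6,A_9)$ is an invariant); to verify the ``generated by weight-$9$ codewords'' hypothesis before applying the criterion of \cite{KO}; and to dispose of the three exceptional codes separately as above. None of these is conceptually hard, but each must be checked.
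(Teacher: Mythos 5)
Your proposal follows essentially the same route as the paper: form $C(H)$ for each matrix, invoke the self-duality lemma, partition the $157132$ codes into $829$ blocks by the invariant $(A_6,A_9)$, apply the vertex-colored-graph equivalence test of \cite{KO} via {\sf nauty} to the $157129$ codes generated by their weight-$9$ codewords, treat the three exceptional codes separately, and read off the minimum weights. The only slight difference is that you separate the three exceptional codes by claiming their $(A_6,A_9)$ pairs are unique in the list, whereas the cleaner (and invariant-based) reason is that being generated by weight-$9$ codewords is itself preserved under equivalence; either way the conclusion is the same.
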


The inequivalence of the above $153979$ ternary self-dual codes was verified independently by {\sf Magma} \cite{Magma}.
The obtained list of the ternary self-dual codes of length $36$ is available electronically at \cite{harada}.

\subsection{Ternary near-extremal self-dual codes}

Here, we concentrate on the $20848$ ternary near-extremal self-dual codes given in Proposition \ref{code}.
Recently, a restriction on the weight enumerators of ternary near-extremal self-dual codes of
lengths divisible by $12$ has been given in \cite{AH}.
The possible weight enumerators of ternary  near-extremal self-dual codes of length $36$
are
\begin{align*}
W_\alpha&=
1
+ \alpha y^9
+( 42840 - 9 \alpha)y^{12}
+( 1400256 + 36 \alpha)y^{15}
+( 18452280- 84 \alpha)y^{18}\\&
+( 90370368 + 126 \alpha)y^{21}
+( 162663480 - 126 \alpha)y^{24}
+( 97808480 + 84 \alpha)y^{27}
\\&
+( 16210656 - 36 \alpha)y^{30}
+( 471240 + 9 \alpha)y^{33}
+( 888 - \alpha)y^{36},
\end{align*}
where $\alpha$ is an integer with   $1\leqslant  \alpha \leqslant 888$ and $\alpha \equiv  0 \pmod 8$.
For the ternary near-extremal self-dual codes given in Proposition \ref{code},
$\alpha$ in their weight enumerators $W_\alpha$ are listed in Table \ref{tabcode}.
In addition, the numbers $N_\alpha$ of the ternary near-extremal self-dual codes
having weight enumerator $W_\alpha$ are listed in Table \ref{tabcode}.

\begin{table}[!htb]
\begin{center}
{\footnotesize
\begin{tabular}{|c|c|c|c|c|c|}\hline
$(\alpha,N_\alpha )$&
$(\alpha,N_\alpha )$&
$(\alpha,N_\alpha )$&
$(\alpha,N_\alpha )$&
$(\alpha,N_\alpha )$&
$(\alpha,N_\alpha )$
\\
\hline
$( 72,  1)$ &$(224,  25)$ &$(288, 1105)$ &$(352, 1352)$ &$(416,   83)$ &$(480,  2)$ \\
$(144,  1)$ &$(232,  41)$ &$(296, 1451)$ &$(360, 1123)$ &$(424,   49)$ &$(488,  2)$ \\
$(168,  3)$ &$(240, 105)$ &$(304, 1580)$ &$(368,  870)$ &$(432,   34)$ &$(496,  1)$ \\
$(176,  1)$ &$(248, 153)$ &$(312, 1860)$ &$(376,  679)$ &$(440,   26)$ &$(512,  1)$ \\
$(184,  1)$ &$(256, 263)$ &$(320, 1750)$ &$(384,  466)$ &$(448,    9)$ &$(544,  1)$ \\
$(200,  3)$ &$(264, 409)$ &$(328, 1894)$ &$(392,  342)$ &$(456,   15)$ &$(600,  1)$ \\
$(208,  9)$ &$(272, 577)$ &$(336, 1762)$ &$(400,  200)$ &$(464,    8)$ & \\
$(216, 13)$ &$(280, 864)$ &$(344, 1566)$ &$(408,  142)$ &$(472,    5)$ & \\
\hline
\end{tabular}
}
\caption{The number $N_\alpha$ of the ternary near-extremal self-dual codes.}
\label{tabcode}
\end{center}
\end{table}

A (Euclidean) lattice $L \subseteq \mathbbmsl{R}^n$ in dimension $n$ is called  {\sl unimodular} if $L = L^{*}$,
where $L^{*}$ is the dual lattice of $L$ under the standard inner product.
The {\sl minimum norm} of a unimodular lattice $L$ is the smallest
the norm among all nonzero vectors of $L$.
The {\sl kissing number} of $L$ is the number of vectors of minimum norm in $L$.
Two lattices $L$ and $L'$ are said to be {\sl isomorphic}
if there exists an orthogonal matrix $A$ with $L' = \{xA \, | \,  x \in L\}$,
otherwise, they are called  {\sl non-isomorphic}.
Let $C$ be a ternary self-dual code of length $n$.
It is known that
\[
A_{3}(C) =
\left\{\left.\frac{1}{\sqrt{3}}\big(x_1, \ldots,x_n\big) \in \mathbbmsl{Z}^n \, \right| \,
\Big(x_1  \hspace{-3.5mm} \pmod 3,  \ldots, x_n  \hspace{-3.5mm} \pmod 3\Big)\in C\right\}
\]
is an unimodular lattice in dimension $n$.
This construction of lattices is well known as {\sl Construction A}.
Let $C$ be a ternary near-extremal self-dual code of length $36$ having weight enumerator $W_\alpha$.
Then, it follows that $A_3(C)$ has minimum norm $3$ and kissing number $\alpha+72$.
We verified by {\sf Magma} \cite{Magma} that the $20848$
unimodular lattices, which are constructed from the $20848$ inequivalent
ternary near-extremal self-dual codes given in Proposition \ref{code},
are non-isomorphic.
This also verifies the inequivalence of the $20848$ ternary near-extremal self-dual codes.

Let $C$ be a ternary near-extremal self-dual code of length $36$ having weight enumerator $W_\alpha$.
Then, the supports of codewords of weight $9$ in $C$ form a $1$-$(36,9, \alpha/8)$ design   \cite{MMN}.
Let $D_i=(X_i,\mathcal{B}_i)$ be a $1$-design,
where $X_i$ is the set of points and $\mathcal{B}_i$ is the collection of blocks of $D_i$  for $i=1,2$.
Two $1$-designs $D_1$ and $D_2$ are said to be {\sl isomorphic} if there exists a bijection
$\phi:X_1 \to X_2$ such that
if we rename every point $x$ of $X_1$ by $\phi(x)$ then $\mathcal{B}_1$ is transformed into $\mathcal{B}_2$,
otherwise, they are called  {\sl non-isomorphic}.
We verified by {\sf Magma} \cite{Magma} that these $20848$ $1$-designs, which are constructed from the $20848$ inequivalent
ternary near-extremal self-dual codes  given in Proposition \ref{code},
are non-isomorphic.
This also verifies the inequivalence of the $20848$ ternary near-extremal self-dual codes.

\section{Association schemes}

In this section, we construct association schemes of order $35$ and class $2$ from skew Hadamard matrices of order $36$.
We define association schemes in matrix form.
An {\sl association scheme} of order $n$ and class $2$ is a set of nonzero $n \times n$
matrices $\{A_0 , A_1, A_2\}$ with entries in $\{0, 1\}$ satisfying
\begin{itemize}
\item[(i)] $A_0 = I$;
\item[(ii)] $A_0+A_1+A_2=J$, where $J$ denotes the all one matrix;
\item[(iii)] For every  $i,j \in \{0,1,2\}$, $A_i^\top \in \{A_0,A_1,A_2\}$ and $A_iA_j$ is a linear combination of  $A_0, A_1, A_2$.
\end{itemize}
An association scheme $\mathscr{A}=\{A_0 , A_1,A_2\}$ is said to be {\sl symmetric} if all matrices $A_i$ are symmetric.
Two association schemes $\mathscr{A}$ and $\mathscr{B}$ are said to be
{\sl isomorphic} if there exists a permutation matrix $P$ such that $\mathscr{B} = P^{-1}\mathscr{A}P$,
otherwise, they are called  {\sl non-isomorphic}.

For any $n \equiv 0 \pmod{4}$, there is a known correspondence between skew Hadamard matrices of order $n$ and nonsymmetric
association schemes of order $n-1$ and class $2$ \cite{hana}.
Let $H$ be a skew Hadamard matrix $H$ of order $n$.
Denote by  $D_i$   the diagonal matrix whose diagonal entries are the $i$th row of $H$ and denote by
$R_i$    the $(n-1) \times (n-1)$ matrix obtained from $D_i^{-1} H D_i$ by deleting the $i$th row and column.
Define
\[
A_0 = I, \, A_1 =\frac{1}{2}(J-2I + R_i) \, \text{ and } \, A_2 = \frac{1}{2}(J-R_i).
\]
Then, $\mathscr{A}(H,i) = \{A_0, A_1, A_2\}$ is an association scheme of order $n-1$ and class $2$.

Let $H_1, \ldots,H_{157132}$ denote the $157132$ skew Hadamard matrices of order $36$ given in Table \ref{tabskewtype}.
Then,
\[
\mathscr{A}(35)=
\big\{\mathscr{A}(H_i,j)  \, \big| \,  i\in\{1, \ldots,157132\}  \, \text{ and } \, j \in \{ 1, \ldots,36\}\big\}.
\]
is a set of nonsymmetric association schemes of order $35$ and class $2$.
We describe how to classify  nonsymmetric association schemes $\mathscr{A}(H_i,j)$ constructed from $H_i\in\{H_1, \ldots,H_{157132}\}$.
Let $\mathscr{A}=\{A_0,A_1,A_2\}$ and $\mathscr{B}=\{B_0,B_1,B_2\}$ be
nonsymmetric association schemes of order $35$ and class $2$.
Let $\mathrm{dg}(M)$ denote the digraph corresponding to a $35 \times 35$ matrix $M$ with entries in $\{0, 1\}$.
Then, $\mathscr{A}$ and $\mathscr{B}$ are isomorphic if and only if
$\mathrm{dg}(A_1)\simeq \mathrm{dg}(B_1)$ or $\mathrm{dg}(A_1) \simeq \mathrm{dg}(B_2)$, where
$X\simeq Y$ means that two digraphs  $X$ and $Y$ are isomorphic \cite{hana}.
Let $\mathrm{clg}(A)$ denote the canonically labeled graph corresponding to  $\mathrm{dg}(A)$, see   \cite{KO, nauty}
for the definition.
It is known that $\mathrm{clg}(A)=\mathrm{clg}(B)$ if and only if $\mathrm{dg}(A) \simeq \mathrm{dg}(B)$  \cite{KO, nauty}.
Then, $\mathscr{A}$ and $\mathscr{B}$ are isomorphic
if and only if $\{\mathrm{clg}(A_1),\mathrm{clg}(A_2)\}=\{\mathrm{clg}(B_1),\mathrm{clg}(B_2)\}$.
Considering canonically labeled graphs significantly reduces the amount of our computation.
In this way, by comparing canonically labeled graphs $\{\mathrm{clg}(A_1),\mathrm{clg}(A_2)\}$ in $\mathscr{A}=\{A_0,A_1,A_2\}$,
we determined by {\sf Magma} \cite{Magma} that $\mathscr{A}(35)$
contains $2793032$ non-isomorphic nonsymmetric association schemes.
This calculation was also done by using {\sf nauty} \cite{nauty}.

\begin{proposition}
At least $2793032$ non-isomorphic nonsymmetric association schemes of order $35$ and class $2$ exist.
\end{proposition}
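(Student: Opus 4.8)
The plan is to exhibit the claimed $2793032$ association schemes concretely as members of the family $\mathscr{A}(35)$ attached to the $157132$ skew Hadamard matrices of order $36$, and then to count that family up to isomorphism by replacing the isomorphism test with an equality test between canonically labeled graphs.

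First I would invoke the correspondence recalled above (from \cite{hana}): for each $H_i$, $i\in\{1,\dots,157132\}$, and each $j\in\{1,\dots,36\}$, letting $R_j$ be the matrix obtained from $D_j^{-1}H_iD_j$ by deleting its $j$th row and column, the triple $A_0=I$, $A_1=\frac{1}{2}(J-2I+R_j)$, $A_2=\frac{1}{2}(J-R_j)$ is a nonsymmetric association scheme of order $35$ and class $2$. Since every diagonal entry of $H_i$ equals $1$ and $D_j^2=I$, one has $R_j+R_j^\top=2I$, hence $A_1^\top=A_2$; thus each scheme $\mathscr{A}(H_i,j)$ is determined by the single $(0,1)$-matrix $A_1$ (which has zero diagonal), equivalently by the digraph $\mathrm{dg}(A_1)$, while $\mathrm{dg}(A_2)$ is $\mathrm{dg}(A_1)$ with all arcs reversed. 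Running over all $i$ and $j$ produces the set $\mathscr{A}(35)$ of at most $157132\cdot 36$ such schemes.

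Next I would use the isomorphism criterion of \cite{hana}: two such schemes $\mathscr{A}=\{I,A_1,A_2\}$ and $\mathscr{B}=\{I,B_1,B_2\}$ are isomorphic if and only if $\mathrm{dg}(A_1)\simeq\mathrm{dg}(B_1)$ or $\mathrm{dg}(A_1)\simeq\mathrm{dg}(B_2)$. Because $A_2=A_1^\top$ and $B_2=B_1^\top$, and because $\mathrm{clg}$ distinguishes digraphs exactly up to isomorphism, this is equivalent to the equality of unordered pairs $\{\mathrm{clg}(A_1),\mathrm{clg}(A_2)\}=\{\mathrm{clg}(B_1),\mathrm{clg}(B_2)\}$. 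Consequently the number of isomorphism classes of nonsymmetric association schemes occurring in $\mathscr{A}(35)$ equals the number of distinct unordered pairs $\{\mathrm{clg}(A_1),\mathrm{clg}(A_2)\}$ that arise, and any such count is automatically a lower bound for the total number of nonsymmetric association schemes of order $35$ and class $2$.

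Finally I would carry out the computation: for every $\mathscr{A}(H_i,j)$ compute the canonically labeled graphs of $\mathrm{dg}(A_1)$ and $\mathrm{dg}(A_2)$ (for instance with {\sf nauty}), form the unordered pair, and deduplicate over all $157132\cdot 36$ schemes; the number of distinct pairs is $2793032$, which yields the claimed bound. The bound is stated as a lower bound rather than an equality because the list of $157132$ skew Hadamard matrices of order $36$ is not known to be complete — the skew types $(2,2)$, $(3,0)$, $(3,1)$ were not exhausted — so any further skew Hadamard matrices of order $36$ could only enlarge $\mathscr{A}(35)$. The main obstacle is purely the scale of the computation: roughly $5.7$ million digraphs on $35$ vertices must be canonically labeled and sorted, so the natural safeguard I would adopt is to repeat the entire tally with an independent implementation (here, {\sf Magma} and {\sf nauty} separately).
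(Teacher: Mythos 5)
Your proposal follows essentially the same route as the paper: build $\mathscr{A}(35)$ from the $157132$ matrices via the correspondence from the cited work, apply the digraph isomorphism criterion, and deduplicate by comparing the unordered pairs of canonically labeled graphs, with the count verified by independent implementations ({\sf Magma} and {\sf nauty}). The only addition is your explicit remark that $A_2=A_1^\top$, which is correct but does not change the argument.
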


The  obtained list of the non-isomorphic nonsymmetric association schemes of order $35$ and class $2$
is available electronically at \cite{harada}.

\section*{Acknowledgments}

Masaaki Harada was supported by JSPS KAKENHI under grant numbers 19H01802 and 23H01087.
Hadi Kharaghani was supported by the Natural Sciences and Engineering Research Council of Canada (NSERC).
Ali Mohammadian  was    supported by the    Natural Science Foundation of Anhui Province  with  grant identifier 2008085MA03 and   by   the National Natural Science Foundation of China with  grant number 12171002.

\end{document}